\newcommand{\rd}{\mathrm{d}}
\begin{document}

\title{Applications of kinetic tools to inverse transport problems}

\author{Qin Li} 
\address{Mathematics Department and Wisconsin Institutes of Discoveries, University of Wisconsin-Madison, 480 Lincoln Dr., Madison, WI 53705 USA.}
\email{qinli@math.wisc.edu}
\author{Weiran Sun}
\address{Department of Mathematics, Simon Fraser University, 8888 University Dr., Burnaby, BC V5A 1S6, Canada}
\email{weiran\_sun@sfu.ca}

\date{\today}

\thanks{The research of Q.L.~was supported in part by National Science Foundation under award 1619778, 1750488, KI-net 1107291, and Wisconsin Data Science Initiative. The research of W.S.~was supported by NSERC Discovery Grant R611626. Both authors would like to thank Laurent Desvillettes for the helpful discussions.}

\begin{abstract}
We show that the inverse problems for a class of kinetic equations can be solved by classical tools in PDE analysis including energy estimates and the celebrated averaging lemma. Using these tools, we give a unified framework for the reconstruction of the absorption coefficient for transport equations in the subcritical and critical regimes. Moreover, we apply this framework to obtain, to the best of our knowledge, the first result in a nonlinear setting. We also extend the result of recovering the scattering coefficient in~\cite{CS98} from 3D to 2D convex domains. 
\end{abstract}
%
\maketitle
\section{Introduction}

Kinetic theory 
describes the behavior of a large number of particles that follow the same physical laws in a statistical manner. Depending on the particular type of particles, various equations are derived. These include, among many others, the Boltzmann equation for the rarified gas, Vlasov-Poisson equation for charged plasma particles, the radiative transfer equation for photons, and the neutron transport equation for neutrons. In the kinetic theory, one uses $f(t, x, v)$ to denote the density distribution function of the particles in the phase space $(x,v)$ at time $t$. The kinetic equation that $f$ satisfies is of the form
\[
\partial_t f + v\cdot\nabla_xf +E\cdot\nabla_v f = Q[f]\,,
\]
where the terms on the left characterizes the trajectory of particles moving with velocity $v$ and accelerated/decelerated by the external field $E$, and the term on the right collects information about particles colliding with each other and/or with the media. The specific form of $Q$ depends on the particular type of particles studied.

During the past three decades, analysis of kinetic equations has seen drastic progresses. In particular, with the introduction of averaging lemma and application of the concept of entropy combined with traditional energy estimates, the well-posedness and the convergence to equilibria can now be shown for many kinetic equations.

Despite their wide applications for forward problems, such techniques are barely used in the inverse setting, where the goal is to recover certain unknown parameters (in $E$ or $Q$ for example). These parameters are usually set constitutively or ``extracted" from lab experiments. Mathematically, such ``extraction" is a process termed inverse problem, which is generally hard to solve rigorously. Aside from very limited examples~\cite{CS2,CS3,SU2d,CS98,time_harmonic,BalTamasan,BalMonard_time_harmonic,SU2008,LLU18,StefanovU,MamonovRen,SUlens} along with some analysis on stability~\cite{Wang1999,Bal14,Bal10,ILW16,Ldiff,NUW,Yamamoto2016,LLU18,CLW,ZhaoZ18}, it is unknown in general, what kind of data would be enough to guarantee a unique reconstruction or when the reconstruction is stable. What is more, in the few solved examples, the techniques used highly depend on careful and explicit calculations of the solutions to the PDEs. As a consequence, it is challenging to extend these results to general models (see reviews in~\cite{Bal_review,Stefanov_2003}). There are, however, a large amount of studies addressing the related computational issues~\cite{Gamba,McCor,LN1983} (also see reviews in~\cite{Arridge99,Kuireview,Arridge_Schotland09}).

In this paper we propose to use energy methods and the averaging lemma to investigate the unique reconstruction of parameters in transport equations in a rather general setup. Since our methods do not rely on fine details of the equation as much as in the previous works, we can apply our results to a class of models including a nonlinear transport equation. We are also able to extend the study of the radiative transport equation in the subcritical case in \cite{CS98,SU2d} to a unified analysis in both subcritical and critical regimes. Further comments regarding the dimensionality can be found in Section~\ref{subsec:Main-Results} where precise statements of the main results are shown.

\subsection{Singular decomposition}
Throughout the paper we study the time-independent problem
\begin{align} \label{eq:RTE-general}
   v \cdot \nabla_x f(x,v) = - \sigma_a f(x,v) + F_f(x) \,,\qquad x \in \Omega \subseteq \R^2 \,,v\in \Ss^1\,,
\end{align}
where $\Omega$ is a bounded convex domain, $\Ss^1$ is the unit circle with a normalized measure, and $F_f(x)$ is a functional of $f$ which only depends on $x$. We assume that $\sigma_a$ is isotropic in the sense that $\sigma_a = \sigma_a(x)$. One  example is the radiative transfer equation (RTE) where $F_f$ is simply defined by taking the zeroth moment of $f$:
\[
F_f(x) = \sigma_s(x) \int_{\Ss^1} f\rd{v}\,.
\]
The data we will be using is of the Albedo type, namely, we can impose an incoming boundary condition and measure the associated outgoing boundary data and define the Albedo operator as
\[
\mathcal{A}:\quad f|_{\Gamma_-}\to f|_{\Gamma_+}\,.
\]
Here $\Gamma_\pm$ are the collections of all coordinates on the physical boundary with the velocity pointing either in or out of the domain defined by
\[
\Gamma_\pm = \{(x,v):\quad x\in\partial\Omega\,, \,\, \pm n(x)\cdot v\leq 0\}\,,
\]
where $n(x)$ is the outward normal at $x\in\partial\Omega$. The goal is  to reconstruct parameters in~\eqref{eq:RTE-general} such as $\sigma_a$ or unknown parameters in $F_f$ by taking multiple sets of incoming-to-outgoing data.

The basic approach we adopt here is the method of singular decomposition. It is introduced in \cite{CS98} to recover the absorption and scattering kernel in the radiative transfer equation. The main idea of this method is built upon the observation that the solution $f$ to~\eqref{eq:RTE-general} can be decomposed into parts with different regularity. Each part contains information of different terms in equation~\eqref{eq:RTE-general}. Hence if one is able to separate these parts with different regularity by imposing proper test functions on $\Gamma_-$, then there is hope to recover various terms in equation~\eqref{eq:RTE-general}.

As an illustration, we explain the basic procedures to reconstruct $\sigma_a$ in~\eqref{eq:RTE-general}. We start with splitting the solution as $f = f_1 + f_2$ where $f_1, f_2$ satisfy
\[
\begin{cases}
v\cdot\nabla_x f_1 = -\sigma_a f_1 \,,\\
f_1|_{\Gamma_-} = f|_{\Gamma_-}
\end{cases} \qquad\text{and}\qquad
\begin{cases}
v\cdot\nabla_x f_2 = -\sigma_a f_2 +F_f \,,\\
f_2|_{\Gamma_-} = 0 \,.
\end{cases}
\]
With a relatively singular and concentrated input, e.g. $f|_{\Gamma_-}=\phi(\frac{x-x_0}{\epsilon})\phi(\frac{v-v_0}{\epsilon})$, $f_1$ will be more singular compared with $f_2$: the information of $f_1$ propagates only in a narrow neighborhood of a ray while $f_2$ is more spread out. Hence one is able to isolate $f_1$ from $f_2$ by measuring the outgoing data only in a small neighborhood of the exit point for $f_1$. It is then clear from the equation for $f_1$ that the absorption coefficient $\sigma_a$ can be fully recovered once $f_1$ known. The details of such analysis is shown in Section~\ref{sec:RTE}. 

The method of singular decomposition has been extensively used in many variations of RTE, including the time-dependent model, when data is angular-averaging type, models with internal source, and models with adjustable frequencies, among some others~\cite{time_harmonic,BalTamasan,BalMonard_time_harmonic,SU2008,LLU18,StefanovU,MamonovRen,SU2d}. See also reviews~\cite{Arridge99,Kuireview,Bal_review}. Stability was discussed in~\cite{Wang1999,Bal14,Bal10,ILW16,Ldiff,CLW,ZhaoZ18}. To our knowledge, all these discussions are centered around linear RTEs. Since linearity plays the central role, so far there has been no result in a nonlinear setup. One of our goals in this paper is to extend singular decomposition to a nonlinear system.

\subsection{Main Results} \label{subsec:Main-Results}
We show two main results in this paper. The first result gives a general framework for recovering the absorption coefficient. To present our idea in the simplest form, we set our proof in two dimension. General dimensions can be similarly treated.
\begin{thm} \label{thm:RTE-general}
Let $\Omega \subseteq \R^2$ be a strictly convex and bounded domain with a $C^1$ boundary. Suppose $\sigma_a \geq 0$ is isotropic and $\sigma_a \in C(\bar\Omega)$. Suppose there exists $p \geq 1$ such that for any given incoming data $\phi$ satisfying 
\begin{align*}
   \phi \in L^p(\Gamma_-) \,, 
\qquad
   \phi \geq 0 \,,
\end{align*} 
equation~\eqref{eq:RTE-general} has a unique nonnegative solution with the bound
\begin{align} \label{bound:RTE-general}
   \norm{F_f}_{L^p(\Omega)} 
\leq 
  C_0 \norm{\phi}_{L^p(\Gamma_-)} \,,
\end{align}
where $C_0$ is independent of $\phi$ and $f$. Then with proper choices of the incoming data and outgoing measurements, the absorption coefficient $\sigma_a$ can be uniquely reconstructed.
\end{thm}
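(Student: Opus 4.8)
The plan is to realize the singular-decomposition strategy outlined in the introduction, but carry it out using only the energy/integrability bound~\eqref{bound:RTE-general} rather than explicit formulas for the solution. Fix an interior direction $v_0 \in \Ss^1$ and a boundary point $x_0$ with $n(x_0) \cdot v_0 < 0$, and let the incoming data be a rescaled bump $\phi_\epsilon(x,v) = \epsilon^{-\alpha}\phi\!\left(\tfrac{x-x_0}{\epsilon}\right)\psi\!\left(\tfrac{v-v_0}{\epsilon}\right)$ concentrated near $(x_0,v_0)$, normalized so that $\|\phi_\epsilon\|_{L^p(\Gamma_-)}$ stays bounded (or tends to zero at a controlled rate). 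Split the solution $f = f_1 + f_2$ as in the introduction. For $f_1$, the absorption-only transport equation, I would write down the explicit ballistic solution along characteristics: if $x(s) = x - s v$ is the backward ray exiting $\partial\Omega$ at $x_- = x_-(x,v)$ after travel time $\tau_-(x,v)$, then
\[
   f_1(x,v) = \phi_\epsilon(x_-,v)\, \exp\!\left(-\int_0^{\tau_-(x,v)} \sigma_a(x - s v)\, \rd s\right).
\]
Evaluating on $\Gamma_+$ gives the outgoing trace of $f_1$ as the incoming bump attenuated by the line integral of $\sigma_a$ along the full chord through $x_0$ in direction $v_0$.

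The core of the argument is to show that, when restricted to a small outgoing window around the exit point $x_+(x_0,v_0)$ of the central ray, the contribution of $f_2$ is negligible compared with $f_1$. Here is where the hypothesis~\eqref{bound:RTE-general} enters: from $\|F_f\|_{L^p(\Omega)} \le C_0 \|\phi_\epsilon\|_{L^p(\Gamma_-)}$ and the trace estimate for the transport equation with zero inflow (i.e.\ $\|f_2|_{\Gamma_+}\|_{L^p(\Gamma_+)} \lesssim \|F_f\|_{L^p(\Omega)}$, which follows from integrating the equation for $f_2$ along characteristics and using $\sigma_a \ge 0$), one controls $f_2$ on the boundary in $L^p$ uniformly — in fact with a norm that is $O(1)$ or small in $\epsilon$. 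By contrast, $f_1|_{\Gamma_+}$ is itself a bump of width $O(\epsilon)$ and height $\sim \epsilon^{-\alpha}e^{-\int \sigma_a}$, so against a test function that is itself concentrated at scale $\epsilon$ near $x_+$, the pairing with $f_1$ is of order one (after the normalization), while the pairing with $f_2$ is bounded by $\|f_2|_{\Gamma_+}\|_{L^p} \cdot (\text{test function})_{L^{p'}} = O(\epsilon^{(\text{dim}-1)/p'})$ by Hölder, hence vanishes as $\epsilon \to 0$ provided $\alpha$ is chosen in the right range. Taking $\epsilon \to 0$ isolates the ballistic part and recovers $\exp(-\int_{\text{chord}} \sigma_a)$, i.e.\ the X-ray transform of $\sigma_a$ over the chord through $(x_0,v_0)$. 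Letting $(x_0,v_0)$ range over all boundary points and directions, we recover the full 2D X-ray (Radon) transform of $\sigma_a$, and since $\sigma_a \in C(\bar\Omega)$ is determined by its X-ray transform on a strictly convex domain by the classical inversion, $\sigma_a$ is uniquely reconstructed.

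I expect the main obstacle to be making the separation of $f_1$ from $f_2$ quantitative \emph{purely} from the $L^p$ bound~\eqref{bound:RTE-general}, without pointwise control on $f_2$. The subtlety is that $L^p$ control of $F_f$ and of the outgoing trace of $f_2$ does not immediately say that $f_2$ is small \emph{in the particular $O(\epsilon)$-window where $f_1$ lives}; one must exploit that $f_2$ cannot concentrate there because its source $F_f$, while possibly large in sup norm, has bounded mass/$L^p$-norm and is smeared over a set of non-vanishing measure, so its ballistic transport to the boundary cannot build a spike of width $\epsilon$. This is exactly the kind of statement the averaging lemma (or, more elementarily here, a direct Hölder estimate on the characteristic integral combined with the change of variables $(x,v)\mapsto$ boundary coordinates) is designed to provide, and the choice of the exponent $\alpha$ in the normalization of $\phi_\epsilon$ must be tuned so that the $f_1$-pairing stays $\gtrsim 1$ while the $f_2$-pairing is $o(1)$; verifying that such an $\alpha$ exists for the given $p \ge 1$ is the delicate bookkeeping step. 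A secondary technical point is the regularity of the maps $(x,v) \mapsto (x_-, \tau_-)$ on a strictly convex $C^1$ domain, needed to justify the change of variables and the continuity of the recovered line integrals in $(x_0,v_0)$; strict convexity is used precisely to keep these travel-time maps well-behaved and the chords transversal to the boundary.
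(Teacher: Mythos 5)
Your overall architecture --- split $f=f_1+f_2$, identify the measurement of $f_1$ with the attenuated X-ray transform of $\sigma_a$ along the chord through $(x_0,v_0)$, show the measurement of $f_2$ vanishes, and invert the two-dimensional X-ray transform --- is exactly the paper's. The gap is in the one step you yourself flag as delicate: the quantitative separation of the two contributions. With a single concentration scale $\epsilon$ in both $x$ and $v$ and a tunable normalization $\epsilon^{-\alpha}$, the H\"older/trace argument you propose cannot succeed for \emph{any} $\alpha$. Indeed, if the test function is an $\epsilon$-bump of height $h$ on $\Gamma_+$, then the pairing with $f_1$ is of order $h\,\epsilon^{-\alpha}\cdot\epsilon^{2}$ (height of the transported bump times the measure of the overlap region in $\Gamma_+$), while your bound gives
$|M_\psi(f_2)|\le \|\psi\|_{L^{p'}(\Gamma_+)}\|f_2\|_{L^p(\Gamma_+)}\le C\, h\,\epsilon^{2/p'}\cdot\epsilon^{-\alpha}\epsilon^{2/p}= C\,h\,\epsilon^{2-\alpha}$.
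The two quantities are of the same order in $\epsilon$ independently of $\alpha$, so the scattered part is never provably negligible. The $L^p$ information alone, fed through H\"older on the outgoing trace, is not enough; one must use the structure of $F_f$.

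The paper's fix has three ingredients worth internalizing. First, it decouples the spatial and angular concentration scales: the incoming data is $\frac{1}{\epsilon\delta}\phi_0(|x-x^{in}|/\epsilon)\,\phi_0(|v-v^{in}|/\delta)$, and the limits are iterated, $\delta\to 0$ first at fixed $\epsilon$, then $\epsilon\to0$. Second, the measurement function is concentrated only in velocity (width $\delta$, height $1$); its spatial factor is a fixed $O(1)$ bump at the exit point, so no spatial window ever competes with the beam width. Third --- and this is the decisive point --- $F_f$ depends only on $x$, so after the change of variables $y=x-sv$ the contribution of $f_2$ factors into a $v$-integral of the test function times a spatial integral of $F_f$, giving
$|M_\psi(f_2)|\le C\,\delta\,\|F_f\|_{L^p(\Omega)}\le C\,\delta\,\|\phi\|_{L^p(\Gamma_-)}\le C\,\epsilon^{-(p-1)/p}\delta^{1/p}$,
which vanishes as $\delta\to0$ at fixed $\epsilon$, while the measurement of $f_1$ converges to $C\,|n(x^{in})\cdot v^{in}|\,e^{-\int\sigma_a}$. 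In other words the gain comes from concentrating the detector in angle against a source that is isotropic in $v$, not from a spatial window, a tuned amplitude $\epsilon^{-\alpha}$, or the averaging lemma (which the paper reserves for the scattering-coefficient problem in Section 3, where genuine extra regularity of the remainder is needed). Your explicit formula for $f_1$, the trace estimate for $f_2$, the regularity of $(x,v)\mapsto(\tau_-,x_-)$, and the reduction to X-ray inversion are all correct; the proof is incomplete only at this separation step, but that step is the theorem.
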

We remark that the assumptions on $F_f$ are not as restrictive as they may appear. In fact it is common for a vast class of kinetic equations that $F_f$ only depends on the moments of $f$ and satisfies the bound in~\eqref{bound:RTE-general}. Upon proving Theorem~\ref{thm:RTE-general} in Section~\ref{sec:RTE}, we will give two examples to demonstrate its effectiveness. 

In the second result, we show the unique recovery of the scattering coefficient $\sigma_s$ in the classical RTE:
\begin{align} \label{eq:RTE-sigma-s}
      v \cdot \nabla_x f = - \sigma_a f + \sigma_s \vint{f} \,,\qquad x \in \Omega \subseteq \R^2 \,,v\in \Ss^1\,,
\end{align}
where $\vint{f} = \int_{\Ss^1} f \dv$ with $\dv$ normalized. 
\begin{thm} \label{thm:scattering}
Let $\Omega \subseteq \R^2$ be a strictly convex and bounded domain with a $C^1$ boundary. Suppose $\sigma_a, \sigma_s \in C(\bar\Omega)$ with $\sigma_a$ given and $0 < \sigma_0 \leq \sigma_s \leq \sigma_a$. Then with proper choices of the incoming data and outgoing measurements, the scattering coefficient $\sigma_s$ in~\eqref{eq:RTE-sigma-s} can be uniquely reconstructed from the measurement of the outgoing ~data.  
\end{thm}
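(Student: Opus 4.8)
The plan is to adapt the singular-decomposition method of~\cite{CS98}, but to replace the explicit kernel estimates used there (which are tailored to three dimensions) by velocity-averaging regularity, which is what makes the separation of scattering orders work in two dimensions. Expand the solution into its collision (Neumann) series $f=\sum_{k\ge 0}f_k$, where $f_0$ solves $v\cdot\nabla_x f_0=-\sigma_a f_0$ with $f_0|_{\Gamma_-}=\phi$ (the ballistic part) and, for $k\ge 1$, $f_k$ solves $v\cdot\nabla_x f_k=-\sigma_a f_k+\sigma_s\vint{f_{k-1}}$ with $f_k|_{\Gamma_-}=0$. Since $0<\sigma_0\le\sigma_s\le\sigma_a$, a standard energy estimate gives the bound $\norm{\sigma_s\vint{f}}_{L^p(\Omega)}\le C_0\norm{\phi}_{L^p(\Gamma_-)}$, so~\eqref{eq:RTE-sigma-s} falls within the framework of Theorem~\ref{thm:RTE-general} (this is exactly the subcritical/critical range): the solution exists, is unique and nonnegative, and the series converges in $L^p$. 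Write $f=f_0+f_1+R$ with $R:=\sum_{k\ge 2}f_k$.

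First I would peel off the ballistic part: since $\sigma_a$ is given and $\phi$ is ours to choose, $f_0$ and its outgoing trace $f_0|_{\Gamma_+}$ are explicit, so subtracting $f_0|_{\Gamma_+}$ from the measured $\mathcal{A}\phi$ leaves $(f_1+R)|_{\Gamma_+}$. The key step is then to separate $f_1$ from $R$ by regularity. By the averaging lemma $\vint{f_0}$ is (roughly) half a derivative smoother than $f_0$; solving the transport equation for $f_1$ with this source propagates that gain, so $f_1$ is smoother than $f_0$, $\vint{f_1}$ is a further half-derivative smoother, and iterating shows that every $f_k$ with $k\ge 2$, hence $R$, has an outgoing trace strictly more regular than that of $f_1$. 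Taking incoming data $\phi=\phi_\epsilon$ that concentrates at a point $(x_0,v_0)\in\Gamma_-$ and testing the ballistic-subtracted outgoing data against an appropriate family of functions on $\Gamma_+$, the $R$-contribution becomes negligible relative to the $f_1$-contribution as $\epsilon\to 0$, and the limit isolates $f_1|_{\Gamma_+}$.

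To recover $\sigma_s$ pointwise, use the Duhamel formula
\[
 f_1(x,v)=\int_0^{\tau(x,v)}\exp\!\Big(\!-\!\int_0^s\sigma_a(x-rv)\,\rd r\Big)\,\sigma_s(x-sv)\,\vint{f_0}(x-sv)\,\rd s\,,
\]
where $\tau(x,v)$ is the distance from $x$ to $\partial\Omega$ in the direction $-v$. As $\phi_\epsilon\to\delta_{(x_0,v_0)}$, $\vint{f_0}$ concentrates, with an explicit (known, since $\sigma_a$ is known) attenuation weight, on the chord $L_{x_0,v_0}=\{x_0+tv_0\}\cap\Omega$. For a generic outgoing $(x,v)$, strict convexity ensures that the backward ray from $x$ along $-v$ meets $L_{x_0,v_0}$ transversally at a single interior point $z=z(x_0,v_0,x,v)$, so in the limit $f_1(x,v)$ equals $\sigma_s(z)$ times known attenuation factors and a Jacobian proportional to $1/|\sin\theta|$, with $\theta$ the angle between $v$ and $v_0$; this determines $\sigma_s(z)$. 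Letting $(x_0,v_0)\in\Gamma_-$ and $(x,v)\in\Gamma_+$ range over directions bounded away from tangency, the point $z$ sweeps out all of $\Omega$ (any interior point lies on two transversal chords), and continuity of $\sigma_s$ extends the reconstruction to $\bar\Omega$.

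The main obstacle is the separation of $f_1$ from $R$ in the second step. In contrast to the 3D case of~\cite{CS98}, where the single-scattering kernel carries a clean delta-type singularity on a codimension-one manifold that can be read off directly, in 2D the single-scattering part is only mildly more singular than the multiple-scattering remainder, so the averaging-lemma regularity gap is narrow; one has to keep careful track of the averaging exponents in the relevant $L^p$ scale and coordinate the concentration rate of $\phi_\epsilon$ with the outgoing test family so that the two contributions genuinely decouple in the limit. A secondary technical issue is maintaining uniform control of the concentration of $\vint{f_0}$ and of the attenuation and Jacobian factors near $\partial\Omega$ and away from grazing directions, which is where strict convexity and the $C^1$ boundary enter.
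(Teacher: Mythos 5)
Your overall skeleton matches the paper's: a singular decomposition into ballistic, single--scattering and multiple--scattering parts, the averaging lemma to control the remainder, concentrated incoming data, and recovery of $\sigma_s$ at the intersection point of two chords via the Duhamel formula for the single--scattering term. Two structural differences are harmless but worth noting: the paper kills the ballistic contribution by making the velocity supports of the incoming data and of the test function disjoint (condition $\eta>\beta+\delta$) rather than subtracting the known ballistic trace, and it avoids the Neumann series entirely --- the remainder is defined as the solution of $v\cdot\nabla_x f_3=-\sigma_a f_3+\sigma_s\vint{f_2}+\sigma_s\vint{f_3}$ with zero inflow, so no series convergence is needed; this matters in the critical case $\sigma_s=\sigma_a$, where convergence of $\sum_k f_k$ in $L^p$ is not clear.

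The genuine gap is exactly the step you flag as ``the main obstacle'': you hope that a regularity gap supplied by the averaging lemma makes the multiple--scattering contribution negligible relative to single scattering, but in 2D no such argument closes. Pairing an $L^1$--normalized concentrating test function on $\Gamma_+$ against a remainder that is merely bounded in some $L^{p_2}(\Omega)$ with $p_2>1$ produces a factor $\theta^{-1/p_2}$ that blows up, not one that vanishes. The paper's resolution is quantitative rather than purely regularity--based: the incoming data is a strip of width $\Eps\eta$ in the direction $\Vin_\perp$ but is normalized only by $1/(\Eps\delta)$, so its mass is small and
$\norm{\phi}_{L^r(\Gamma_-)}\leq c_0\,\Eps^{-\frac{r-1}{r}}\delta^{-\frac{r-1}{r}}\eta^{\frac1r}$
carries an explicit positive power of $\eta$, the (small) angle between $\Vin$ and $\Vout$. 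This suppresses the multiple--scattering measurement by $\eta^{1/r}$, while the single--scattering measurement survives at order one precisely because the $1/\eta$ Jacobian of the transversal intersection (your $1/|\sin\theta|$) is built into the $\Eps\eta$ scaling of $\phi$. The averaging lemma enters only to upgrade integrability $L^{r}\to L^{p_1}\to L^{p_2}$ so that the loss $\theta^{-1/p_2}$ from H\"older is controllable, and the argument closes only after the delicate choice $\beta=\delta=\eta^{1+\beta_0}$, $r\in(1,2)$, with $\eta\to0$ taken before $\Eps,\theta\to0$. With a mass--normalized $\phi_\epsilon\to\delta_{(x_0,v_0)}$ and a transversal angle bounded away from zero, as you propose, the single-- and multiple--scattering contributions are of comparable size and do not decouple; this balancing mechanism is the missing idea.
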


Two comments are in place for Theorem~\ref{thm:scattering}: first, we only show the result in $\R^2$ since this is the case not covered in~\cite{CS98}. Similar strategy used to prove Theorem~\ref{thm:scattering} can also be applied to any higher dimension by using the same incoming data and measurement as in~\cite{CS98}. In this sense, our result is an extension of~\cite{CS98}. Second, in $\R^2$ so far we can only treat the case where $\sigma_s$ is isotropic, that is, $\sigma_s = \sigma_s(x)$. Similar as in~\cite{CS98}, such constraint is not needed for higher dimensions. We also note that 2D case was studied in~\cite{SU2d}. However, there smallness of the scattering kernel is assumed while we can deal with the critical and general subcritical cases. 

This paper is laid out as follows. In Section~\ref{sec:RTE}, we show the proof of Theorem~\ref{thm:RTE-general} together with its applications to the classical linear RTE and a nonlinear RTE coupled with a temperature equation. In Section~\ref{sec:scattering}, we show the proof of Theorem~\ref{thm:scattering}. Some technical parts in the proofs of these two theorems are left in the appendices. 

\section{Absorption Coefficient for Radiative Transfer Equations} \label{sec:RTE}

The domain $\Omega$ considered in this paper is strictly convex with a $C^1$ boundary. More precisely, we assume that there exists a function $\xi: \R^2 \to \R$ such that $\Omega$ and its boundary are described by
\begin{align} \label{def:xi}
  \Omega
= \{x \big| \, \xi(x) \leq 0\}
\qquad\text{and}\qquad
  \del\Omega
= \{x \big| \, \xi(x) = 0\} \,.
\end{align}
We assume that $\nabla_x \xi(x) \neq 0$ for any $x \in \del\Omega$ and there exists a constant $D_0 > 0$ such that
\begin{align} \label{property:xi}
   \sum_{i, j=1}^2 \del_{ij} \xi(x) a_i a_j 
\geq 
   D_0 |a|^2 \,,
\qquad
  \forall \, x \in \Omega \,.
\end{align}
The outward normal $n(x)$ at $x \in \Omega$ is then given by 
\begin{align*}
    n(x) = \frac{\nabla_x \xi(x)}{|\nabla_x \xi(x)|} \,,
\qquad
   \forall \, x \in \del\Omega \,.  
\end{align*}
For each $(x, v) \in \bar\Omega \times \Ss^1$, we use $\tau_-(x, v)$ and $\tau_+(x, v)$ to denote the nonnegative backward and forward exit times, which are the instances where
\begin{align} \label{def:tau-pm}
    x - \tau_-(x, v) v \in \del \Omega \,,
\qquad
    x + \tau_+(x, v) v \in \del \Omega \,,
\quad
  \text{for any $(x, v) \in \bar \Omega \times \Ss^1$.}
\end{align}
Recall the basic properties of the backward exit time from Lemma 2 in \cite{Guo2010}:
\begin{lem}[\cite{Guo2010}] \label{lem:tau-continuity}
Suppose $\Omega \subseteq \R^2$ is strictly convex and has a $C^1$ boundary. Suppose $\xi$ is the characterizing function of $\Omega$ and $\del\Omega$ which satisfies~\eqref{def:xi} and~\eqref{property:xi}. For any $(x, v) \in \bar\Omega \times \Ss^1$, let $\tau_-$ be the backward exit time defined in~\eqref{def:tau-pm} and $x_- \in \del\Omega$ be the exit point given by $x_-= x - \tau_-(x, v) v$. Then
\begin{enumerate}[label=(\alph*), leftmargin=*]
\item $(\tau_-, x_-)$ are uniquely determined for each $(x, v) \in \bar\Omega \times \Ss^1$;

\item Suppose $\xi \in C^1(\R^3)$ and $v \cdot n(x_-) \neq 0$. Then $\tau_-, x_-$ are differentiable at $(x, v)$ with
\begin{align*}
   &\nabla_x \tau_-(x, v) = \frac{n(x_-)}{v \cdot n(x_-)} \,,
\hspace{1.5cm}
   \nabla_v \tau_-(x, v) = \frac{\tau_- n(x_-)}{v \cdot n(x_-)} \,,
\\
  & \nabla_x x_-(x, v) = \Id - \nabla_x \tau_- \otimes v \,,
\qquad
  \nabla_v x_-(x, v) = -\tau_- \Id - \nabla_v \tau_- \otimes v \,. 
\end{align*}
\end{enumerate}
\end{lem}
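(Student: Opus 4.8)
The plan is to treat the two parts separately and by elementary means, since $\tau_-$ has no closed form: part~(a) is a one-variable convexity argument along the ray, and part~(b) is the implicit function theorem.

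For part~(a), I would fix $(x,v) \in \bar\Omega\times\Ss^1$ and examine the scalar function $\psi(t) = \xi(x - tv)$ for $t \ge 0$. One has $\psi(0) = \xi(x) \le 0$, and since $\Omega$ is bounded $\psi(T) > 0$ once $x - Tv$ leaves $\bar\Omega$; moreover~\eqref{property:xi} gives $\psi''(t) = \sum_{i,j} \del_{ij}\xi(x-tv)\, v_i v_j \ge D_0 |v|^2 = D_0 > 0$, so $\psi$ is strictly convex on $[0,\infty)$. The intermediate value theorem then produces a zero of $\psi$, and strict convexity (at most two zeros, together with the sign pattern $\psi(0) \le 0 < \psi(T)$) pins down the first backward exit instance uniquely; I would dispose of the grazing/boundary cases where $\psi(0) = 0$ by noting that either $\psi$ is positive for all $t > 0$, so that $\tau_- = 0$, or else $\psi$ has a single positive root. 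Setting $x_- = x - \tau_- v$ completes~(a).

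For part~(b), I would apply the implicit function theorem to $\Phi(x,v,t) := \xi(x - tv)$, which vanishes identically along $t = \tau_-(x,v)$. The crucial observation is that $\del_t\Phi|_{t=\tau_-} = -\nabla_x\xi(x_-)\cdot v = -|\nabla_x\xi(x_-)|\,(v\cdot n(x_-))$, which is nonzero exactly because of the hypothesis $v\cdot n(x_-)\neq 0$; since $\xi \in C^1$ this makes $\tau_-$, and hence $x_- = x - \tau_- v$, continuously differentiable near $(x,v)$ once $\tau_-$ is extended to an open set of velocities in $\R^2$. The explicit formulas then follow by differentiating the identity $\xi(x-\tau_-(x,v)v) = 0$: in the $x_k$ direction this reads $\nabla_x\xi(x_-)\cdot(e_k - \del_{x_k}\tau_-\, v) = 0$, so $\nabla_x\tau_- = \nabla_x\xi(x_-)/\big(\nabla_x\xi(x_-)\cdot v\big) = n(x_-)/\big(v\cdot n(x_-)\big)$; the $v_k$ direction gives the companion formula for $\nabla_v\tau_-$ in the same way; and the chain rule applied to $x_- = x - \tau_- v$ gives $\nabla_x x_- = \Id - \nabla_x\tau_- \otimes v$ and $\nabla_v x_- = -\tau_-\Id - \nabla_v\tau_- \otimes v$.

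The only genuinely delicate step is the uniqueness in part~(a): one must use strict convexity to obtain existence and uniqueness of the exit time simultaneously and, in particular, to exclude tangential rays contributing a spurious root. Part~(b) should then be routine, since the transversality condition $v\cdot n(x_-)\neq 0$ is handed to us precisely in the form the implicit function theorem requires.
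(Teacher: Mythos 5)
The paper does not prove this lemma at all---it is quoted verbatim from \cite{Guo2010}---and your argument (existence and uniqueness of the exit time via strict convexity of $t \mapsto \xi(x-tv)$ along the backward ray, then the implicit function theorem applied to $\xi(x-tv)=0$ using the transversality $\partial_t\,\xi(x-tv)\big|_{t=\tau_-} = -|\nabla_x\xi(x_-)|\,\big(v\cdot n(x_-)\big) \neq 0$, followed by implicit differentiation for the explicit formulas) is precisely the standard proof behind the cited result, so your route is the intended one and is sound. One remark worth recording: if you actually carry out the $v$-differentiation of $\xi\big(x-\tau_-(x,v)v\big)=0$ you obtain $\nabla_v\tau_- = -\tau_-\,n(x_-)/\big(v\cdot n(x_-)\big)$, i.e.\ the formula displayed in the lemma as transcribed here is missing a minus sign (Guo's original statement has it, and it is the sign consistent with $\nabla_v\tau_- = -\tau_-\nabla_x\tau_-$), so your method is correct and in fact exposes a typo in the statement rather than containing a gap; the only other point deserving a word is that the lower bound~\eqref{property:xi} on $\partial_{ij}\xi$ is only needed, and only invoked, on the portion of the ray lying in $\bar\Omega$, which is exactly where your convexity argument uses it.
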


The rest of this section is devoted to the proof of Theorem~\ref{thm:RTE-general}. As introduced in the previous section, the idea of the proof is to separate the terms in the equations and compare the induced singularities. In particular, let $f$ the solution to the equation~\eqref{eq:RTE-general} with boundary condition $f|_{\Gamma_-} = \phi(x,v)$. We separate it as $f = f_1+ f_2$ so that $f_1$ satisfies
\begin{equation}\label{eqn:f1}
   v \cdot \nabla_x f_1 = - \sigma_a f_1 \,,
\qquad
  f_1 \big|_{\Gamma_-} = \phi(x, v) \,,
\end{equation}
and $f_2$ satisfies
\begin{equation}\label{eqn:f2}
   v \cdot \nabla_x f_2 = - \sigma_a f_2 + F_f(x) \,,
\qquad
  f_2 \big|_{\Gamma_-} = 0 \,.
\end{equation}
If we choose $\phi(x,v)$ to be a delta-like function concentrating at a point $(\Xin, \Vin) \in \Gamma_-$, then it is clear through equation~\eqref{eqn:f1} that most of the information of $f$ will be propagating along the ray 
\[
x=\Xin+\tau\Vin\,,\quad\tau\in[0,\tau_+]\,.
\]
Defining
\begin{equation}\label{eqn:def_out}
   \Vout = \Vin \,, \qquad \Xout = \Xin + \tau_+(\Xin, \Vin) \Vin \,,
\end{equation}
and letting the test function $\psi$ concentrate on $(\Xout,\Vout)$, we will split the measurement of the outgoing data into two components (with $\dGamma = n(x)\cdot v\rd S_x\rd{v}$):
\begin{align}\label{eqn:measurement}
   M_\psi(f)&= \iint_{\Gamma_+} \psi(x, v) f(x, v) \dGamma \nonumber\\
   &= \iint_{\Gamma_+} \psi(x, v) f_1(x, v) \dGamma + \iint_{\Gamma_+}       \psi(x, v) f_2(x, v) \dGamma
\\
   &=  M_\psi(f_1)+M_\psi(f_2)\nonumber \,.
\end{align}
The estimates in the proof are designated to show that
\begin{equation}\label{eqn:goal_f1f2}
M_\psi(f_1)\sim \text{X-ray transform of }\sigma_a\,,\quad M_\psi(f_2)\sim 0\,,
\end{equation}
from which one can reconstruct $\sigma_a$ via the unique recovery of $\sigma_a$ in the X-ray transform. Details of the proof are shown below. One convention that we follow in the rest of this paper is that we repeatedly use $c_0$ and $C_0$ to denote constants that may change from line to line.
\begin{proof}[Proof of Theorem~\ref{thm:RTE-general}]
Let $\Eps, \delta > 0$ be arbitrary constants to be chosen later and let $\phi_0$ be a smooth function on $\R$ such that 
\begin{align*}
  0 \leq \phi_0(r) \leq 1 \,,
\qquad
  \phi_0 \in C^\infty_c([0, \infty)),  
\qquad 
  \phi_0(0) = 1\,,
\qquad
  \int_0^\infty \phi_0(r) \dr = 1 \,.
\end{align*}
For any $(\Xin, \Vin) \in \Gamma_-$ such that
\begin{align} \label{cond:non-degen}
    \Vin \cdot n(\Xin) = -c^{in} < 0 \,,
\end{align}
choose the incoming data for equation~\eqref{eq:RTE-general} as
\begin{align*}
    \phi(x, v) 
= \frac{1}{\Eps \delta}
   \phi_0\vpran{\frac{|x - \Xin|}{\Eps}}
   \phi_0\vpran{\frac{|v - \Vin|}{\delta}} \,,
\qquad
   (x, v) \in \Gamma_- \,.
\end{align*}
Let $(\Xout,\Vout)$ be defined in~\eqref{eqn:def_out}, and we take the test function for measurement to be:
\begin{align*}
   \psi(x, v)
= \psi_0 (x - \Xout) 
   \psi_0 \vpran{\frac{v - \Vout}{\delta}}
= \psi_0 (x - \Xout) 
   \psi_0 \vpran{\frac{v - \Vin}{\delta}} \,,
\qquad
   (x, v) \in \Gamma_+ \,,
\end{align*}
where $\psi_0(r)$ is a smooth function that satisfies 
\begin{align} \label{def:psi-0}
  0 \leq \psi_0(r) \leq 1 \,, \qquad   \psi_0 \in C^\infty_c([0, \infty))\,,  \qquad    \psi_0(0) = 1\,, \qquad   \int_0^\infty \psi_0(r) \dr = 1 \,.
\end{align}
We can solve along characteristics in~\eqref{eqn:f1} and~\eqref{eqn:f2} to obtain explicit and semi-explicit formulas for $f_1$ and $f_2$ as
\begin{align}\label{eqn:f1_soln}
   f_1(x, v)
& = e^{-\int_0^{\tau_-(x, v)} \sigma_a(x - sv) \ds}
      \phi(x - \tau_-(x, v) v, v) \,,
\qquad
    \forall \, (x, v) \in \bar\Omega \times \Ss^1
\end{align}
and
\begin{align}\label{eqn:f2_soln}
   f_2(x,  v)
 = \int_0^{\tau_-(x, v)} e^{-\int_0^s \sigma_a(x - \tau v) \dtau}
     F_f(x - sv)\ds \,,
\qquad
   \forall \, (x, v) \in \bar\Omega \times \Ss^1 \,.
\end{align}
For future use, define the sets $\Ss^1_{x, +}$ and $\del\Omega_v^+$ by
\begin{align}
  \del\Omega_v^+= \left\{x \in \del\Omega \big| n(x) \cdot v > 0\right\} \,,
\qquad
  \text{for all $v \in \Ss^1$} \,, \label{def:Omega-v-plus}
\\
  \Ss^1_{x, +} = \left\{v \in \Ss^1 \big| \, v \cdot n(x) > 0 \right\} \,,
\qquad
  \text{for all $x \in \del\Omega$} \,. \label{def:Ss-x-plus}
\end{align}
We show~\eqref{eqn:goal_f1f2} in two steps. \smallskip

\noindent
\underline{\textbf{Step 1: limit of $M_\psi(f_1)$}} \,
Using~\eqref{eqn:f1_soln}, we have
\begin{align}
   M_\psi(f_1)&= \iint_{\Gamma_+} \psi(x, v)  e^{-\int_0^{\tau_-(x, v)} \sigma_a(x - sv) \ds}\phi(x - \tau_-(x, v) v, v) \dGamma\\
& = \frac{1}{\Eps \delta} \int_{\del\Omega} \int_{\Ss^1_{x, +}} e^{-\int_0^{\tau_-(x, v)} \sigma_a(x - sv) \ds} \psi_0 (x - \Xout) \psi_0 \vpran{\frac{\abs{v - \Vin}}{\delta}} \nn\\
& \hspace{2cm}
     \times  \phi_0 \vpran{\frac{\abs{x - \tau_-(x, v) v - \Xin}}{\Eps}}
      \phi_0 \vpran{\frac{\abs{v - \Vin}}{\delta}}
     n(x) \cdot v \dv\dS_x  \nn
\\
& = \frac{1}{\Eps}
      \int_{\del\Omega}   \psi_0 (x - \Xout)  \,
     G_{\Eps, \delta}(x) \dS_x \,, \label{integral:f-1-RTE-general}
\end{align}
where $G_{\Eps, \delta}(x)$ denotes the inner integral and it can be further simplified in notation as
\begin{align*}
   G_{\Eps, \delta}(x)
&= \frac{1}{\delta} \int_{\Ss^1_{x, +}}
         e^{-\int_0^{\tau_-(x, v)} \sigma_a(x - sv) \ds}
   \psi_0 \vpran{\frac{\abs{v - \Vin}}{\delta}}
\\
& \hspace{2.4cm}
     \times  \phi_0 \vpran{\frac{\abs{x - \tau_-(x, v) v - \Xin}}{\Eps}}
      \phi_0 \vpran{\frac{\abs{v - \Vin}}{\delta}}
     n(x) \cdot v \dv \\
     &=\frac{1}{\delta} \int_{\Ss^1_{x, +}} H_\Eps(x, v)
   \psi_0 \vpran{\frac{\abs{v - \Vin}}{\delta}}
   \phi_0 \vpran{\frac{\abs{v - \Vin}}{\delta}} \dv
\end{align*}
with
\begin{align} \label{def:H-eps}
   H_\Eps(x, v)
&= e^{-\int_0^{\tau_-(x, v)} \sigma_a(x - sv) \ds}
    \phi_0 \vpran{\frac{\abs{x - \tau_-(x, v) v - \Xin}}{\Eps}}
    n(x) \cdot v \,.
\end{align}
We will first pass $\delta \to 0$ and then $\Eps \to 0$ in~\eqref{integral:f-1-RTE-general}. Note that for each fixed $\Eps$ and $x \in \del\Omega$, the inner integral $G_{\Eps, \delta}$ satisfies
\begin{align*}
   0 
\leq   
  G_{\Eps, \delta}(x) 
&\leq
    \frac{1}{\delta} \int_{\Ss^1_{x, +}}
            \psi_0 \vpran{\frac{\abs{v - \Vin}}{\delta}} \dv
\leq \frac{1}{\delta} \int_{0}^{2\pi}
            \psi_0 \vpran{\frac{\sin\theta/2}{\delta/2}} \dv
\\
&\leq
  \frac{1}{\delta} \int_{0}^{\alpha_0 \delta}
            \psi_0 \vpran{\frac{\sin\theta/2}{\delta/2}} \dv
 + \frac{1}{\delta} \int_{2\pi - \alpha_0 \delta}^{2\pi}
            \psi_0 \vpran{\frac{\sin\theta/2}{\delta/2}} \dv
\leq
   2 \alpha_0 \,,
\end{align*}
where $\alpha_0$ only depends on the size of the support of $\psi_0$. Such uniform bound ensures that the Lebesgue Dominated Convergence Theorem can be applied when taking the $\delta$-limit in~\eqref{integral:f-1-RTE-general}. To compute the pointwise $\delta$-limit of $G_{\Eps, \delta}$, we denote $D_{\phi_0}$ as the set where
\begin{align*}
   D_{\phi_0} 
= \left\{(x, v) \in \Gamma_+ \Big| \, 
    \tfrac{\abs{v - \Vin}}{\delta}, \tfrac{|x_- - \Xin|}{\Eps} \in \Supp \phi_0\right\} \,,
\qquad
   x_- = x - \tau_-(x, v) v \,.
\end{align*}
Then the measurement $M_\psi(f_1)$ becomes
\begin{align*}
   M_\psi(f_1)= \iint_{D_{\phi_0}} \psi(x, v) 
         e^{-\int_0^{\tau_-(x, v)} \sigma_a(x - sv) \ds}
      \phi(x - \tau_-(x, v) v, v) \dGamma \,.
\end{align*}
By the non-degeneracy condition of $(\Xin, \Vin)$ in~\eqref{cond:non-degen} and the support of $\phi_0$, the normal direction $n(\cdot)$ is continuous in a small neighbourhood of $\Xin$. Hence, if we choose $\delta, \Eps$ to be small enough, then for any $(x, v) \in D_{\phi_0}$, we have
\begin{align} \label{cond:non-degen-1}
   v \cdot n(x_-) < - \frac{1}{2} c^{in} < 0 \,,
\qquad
  x_- = x - \tau_-(x, v) v \,.
\end{align}
Application of Lemma~\ref{lem:tau-continuity} gives that
\begin{align*}
   \tau_-(x, v) \in C^1(\bar D_{\phi_0}) \,,
\end{align*}
which implies that $\tau_-(x, v)$ is uniformly continuous on $D_{\phi_0}$. 
Together with the continuity of $\sigma_a$, and $\phi_0$, we deduce that for each $\Eps$, the function $H_\Eps(\cdot, \cdot): \bar D_{\phi_0} \to \R$ is continuous. Hence $H_\Eps$ is uniformly continuous on $\bar D_{\phi_0}$ and thus
\begin{align*}
   \frac{1}{\delta} \int_{\Ss^1_{x, +}} 
   \abs{H_\Eps(x, v) - H_\Eps(x, \Vin)}
   \psi_0 \vpran{\frac{\abs{v - \Vin}}{\delta}}
   \phi_0 \vpran{\frac{\abs{v - \Vin}}{\delta}} \dv
\to 0 \qquad\text{as $\delta \to 0$ uniformly in $x$.}
\end{align*}
Therefore, for each $x \in \bar\Omega$, 
\begin{align*}
   \lim_{\delta \to 0} G_{\Eps, \delta}(x)
&= H_\Eps(x, \Vin) 
    \vpran{\frac{1}{\delta} \lim_{\delta \to 0} \int_{\Ss^1_{x,+}}\psi_0 \vpran{\frac{\abs{v - \Vin}}{\delta}}
   \phi_0 \vpran{\frac{\abs{v - \Vin}}{\delta}} \dv}
\to C_{\psi_0, \phi_0} H_\Eps(x, \Vin) \,,
\end{align*}
where the constant $C_{\psi_0, \phi_0}$ is given by
\begin{align*}
   C_{\psi_0, \phi_0}
= \frac{1}{\delta} \lim_{\delta \to 0} \int_{\Ss^1_{x,+}}\psi_0 \vpran{\frac{\abs{v - \Vin}}{\delta}}
\phi_0 \vpran{\frac{\abs{v - \Vin}}{\delta}} \dv
= \int_\R \phi_0(r) \psi_0(r) \dr \,.
\end{align*}
Applying the Lebesgue Dominated Convergence Theorem we obtain
\begin{align*}
& \quad \,
     \lim_{\delta \to 0} M_\psi(f_1)\\
&= \frac{C_{\psi_0, \phi_0}}{\Eps}
      \int_{\del\Omega}   \psi_0 (x - \Xout)  \,
     H_\Eps(x, \Vin) \dS_x
\\
& = \frac{C_{\psi_0, \phi_0}}{\Eps}
      \int_{\del\Omega}   \psi_0 (x - \Xout)  \,
      e^{-\int_0^{\tau_-(x, \Vin)} \sigma_a(x - s \Vin) \ds}
    \phi_0 \vpran{\frac{\abs{x - \tau_-(x, \Vin) \Vin - \Xin}}{\Eps}}
    n(x) \cdot \Vin \dS_x \,.
\end{align*}
Furthermore if we make the change of variables using
\begin{align*}
    y = x_-(x, \Vin) = x - \tau_-(x, \Vin) \Vin \,,
\end{align*}
then by the non-degeneracy in~\eqref{cond:non-degen-1}, the mapping is invertible and we claim that
\begin{align} \label{geometry-1}
   \abs{n(y) \cdot \Vin} \dS_y 
= - n(y) \cdot \Vin \dS_y
= \abs{n(x) \cdot \Vin} \dS_x
= n(x) \cdot \Vin \dS_x\,.
\end{align}
This relation can be justified through the physical meanings of $\abs{n(x) \cdot \Vin} \dS_x$ and $\abs{n(y) \cdot \Vin} \dS_y$  as the effective fluxes into and out of the boundary. The mathematical proof for~\eqref{geometry-1} is given in Appendix~\ref{sec:geometry}. 
Making such change of variables, we obtain that
\begin{align}
& \quad \,
   \lim_{\Eps \to 0} \lim_{\delta \to 0} M_\psi(f_1)
\\
& = \lim_{\Eps \to 0} \frac{C_{\psi_0, \phi_0}}{\Eps}
      \int_{\del\Omega}   \psi_0 (x(y) - \Xout)  \,
      e^{-\int_0^{\tau_-(x(y), \Vin)} \sigma_a(x(y) - s \Vin) \ds}
    \phi_0 \vpran{\frac{\abs{y - \Xin}}{\Eps}}
    \abs{n(y) \cdot \Vin} \dS_y \nn
\\
& = C_{\psi_0, \phi_0} \abs{n(\Xin) \cdot \Vin}
   e^{-\int_0^{\tau_-(\Xout, \Vin)} \sigma_a(\Xout - s \Vin) \ds} \nn
\\
&   = C_{\psi_0, \phi_0} \abs{n(\Xin) \cdot \Vout}
       e^{-\int_0^{\tau_-(\Xout, \Vout)} \sigma_a(\Xout - s \Vout) \ds} \,,
       \label{eq:X-ray-f-1-RTE-general}
\end{align}
where we have applied the differential relation $\dS_y = {\rm d}|y - x^{in}|$ and $\psi_0(0) = 1$. Note that the last term involves the X-ray transformation of $\sigma_a$.

\smallskip
\noindent
\underline{\textbf{Step 2: limit of $M_\psi(f_2)$}} \,
By~\eqref{eqn:f2_soln}, the contribution of $f_2$ toward the measurement is
\begin{align} \label{formula:f-2-contri-RTE-general}
M_\psi(f_2)= \int_{\Ss^1} \!\! \int_{\del\Omega_{v}^+} \!\!
    \int_0^{\tau_-(x, v)} \psi(x, v) e^{-\int_0^s \sigma_a(x - \tau v) \dtau}
     F_f(x - sv) n(x) \cdot v\ds \dS_x \dv  \,.
\end{align}
Make a change of variables in the above integral with
\begin{align} \label{def:change-variable-1}
    y = x - s v \,,\qquad \text{for}\quad x \in \del\Omega_v^+\quad\text{and}\quad s \in (0, \tau_-(x, v))\,.
\end{align}
Note that $y \to (x, s)$ is a one-to-one mapping with the relation (verified in Appendix~\ref{sec:geometry})
\begin{align} \label{geometry-2}
    \dy = n(x) \cdot v \ds\dS_x
\end{align}
and the inverse map is
\begin{align*}
   s = \tau_+(y, v) \,,
\qquad
   x = y + \tau_+(y, v) v \,.
\end{align*}
Hence one can rewrite the integral in~\eqref{formula:f-2-contri-RTE-general} as
\begin{align*}
M_\psi(f_2) = \int_{\Ss^1} \int_{\Omega}
   \psi(y + \tau_+(y, v) v, v)
   e^{-\int_0^{\tau_+(y, v)} \sigma_a(y + \tau_+(y, v) v - \tau v) \dtau} 
   F_f(y) \dy \dv \,.
\end{align*}
With the definition of $\psi$ and the bound for $F_f$ in~\eqref{bound:RTE-general}, we obtain that
\begin{align} 
   \abs{M_\psi(f_2)} \nn &\leq
  \int_{\Ss^1} \int_{\Omega} \psi_0 \vpran{\frac{v - \Vin}{\delta}}\abs{F_f(y)} \dy \dv \nn
\\
&= \vpran{\int_{\Ss^1} \psi_0 \vpran{\frac{v - \Vin}{\delta}} \dv}
     \vpran{\int_{\Omega} \abs{F_f(y)} \dy} \nn
\\
&\leq
  C_\Omega \delta \norm{F_f}_{L^p(\Omega)}
\leq
  C_\Omega \delta \norm{\phi}_{L^p(\Gamma_-)} \,. \label{bound:f-2-intermediate}
\end{align}
The $L^p$-norm of $\phi$ can be estimated using its definition:
\begin{align*}
  \norm{\phi}_{L^p(\Gamma_-)}^p
&= \iint_{\Gamma_-} \phi^p (x, v) \abs{n(x) \cdot v} \dS_x \dv
\\
&= \frac{1}{\Eps^p \delta^p} \iint_{\Gamma_-} 
   \phi_0^p\vpran{\frac{|x - \Xin|}{\Eps}}
   \phi_0^p\vpran{\frac{|v - \Vin|}{\delta}} \abs{n(x) \cdot v} \dS_x \dv
\\
& \leq
   \frac{1}{\Eps^p \delta^p} 
   \vpran{\int_{\del\Omega}\phi_0\vpran{\frac{|x - \Xin|}{\Eps}} \dS_x}
   \vpran{\int_{\Ss^1} \phi_0\vpran{\frac{|v - \Vin|}{\delta}}\dv}
\\
& \leq 
   C_{\phi_0} \Eps^{-(p-1)} \delta^{-(p-1)} \,.
\end{align*}
Plugging such bound back in~\eqref{bound:f-2-intermediate} we obtain
\begin{align}\label{eqn:f2_estimate}
     \lim_{\Eps \to 0} \lim_{\delta \to 0}\abs{ M_\psi(f_2)} \leq
  C_{\Omega, \phi_0}   \lim_{\Eps \to 0} \lim_{\delta \to 0}\Eps^{-\frac{p-1}{p}} \delta^{\frac{1}{p}} =0\,.
\end{align}
Finally, by combining~\eqref{eq:X-ray-f-1-RTE-general} and~\eqref{eqn:f2_estimate} we have
\begin{align*}
    \lim_{\Eps \to 0} \lim_{\delta \to 0} M_\psi(f) = \iint_{\Gamma_+} \psi(x, v) f(x, v) \dGamma
= C_{\phi_0, \psi_0} \abs{n(\Xin) \cdot \Vin} e^{-\int_0^{\tau_-(\Xout, \Vout)} \sigma_a(\Xout - s \Vout) \ds} \,.
\end{align*}
Therefore, the X-ray transformation of $\sigma_a$ is uniquely determined by the measurement, which in turn implies that $\sigma_a$ is uniquely recoverable by the measurement.  
\end{proof}

\subsection{Examples}
Theorem~\ref{thm:RTE-general} is rather general and one only needs to verify two conditions in order to apply it: the well-posedness of the forward problem and the bound~\eqref{bound:RTE-general}. For many kinetic equations these conditions follow from energy methods. Below we give two examples.

The first example is the classical linear RTE with $F_f = \sigma_s\vint{f}$ and the equation reads
\begin{align} \label{eq:RTE-linear}
   v \cdot \nabla_x f = - \sigma_a f + \sigma_s \vint{f} \,.
\end{align}
The statement of the unique solvability of $\sigma_a$ is
\begin{thm} \label{thm:RTE-linear}
Suppose $\Omega$ is a strictly convex and bounded domain with a $C^1$ boundary. Suppose there exists a constant $\sigma_0 > 0$ such that
\begin{align*}
    \sigma_a \in C(\bar \Omega) \,,
\qquad 
   \sigma_a \geq \sigma_s \geq \sigma_0 > 0 \,.
\end{align*}
Then with proper choices of the incoming data, the absorption coefficient $\sigma_a$ can be uniquely recovered from the measurement of the outgoing data. 
\end{thm}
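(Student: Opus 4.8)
The plan is to verify that the linear RTE~\eqref{eq:RTE-linear} satisfies the two hypotheses of Theorem~\ref{thm:RTE-general}, namely the well-posedness of the forward problem for nonnegative $L^p$ incoming data and the bound $\norm{F_f}_{L^p(\Omega)} \leq C_0 \norm{\phi}_{L^p(\Gamma_-)}$ with $F_f = \sigma_s \vint{f}$. Once these are established, the theorem applies directly and yields the unique recovery of $\sigma_a$. I would take $p = 1$, which is the natural exponent here since the relevant quantity $\int_\Omega \sigma_s \vint{f}\,\rd x$ is a mass-type integral and the assumption $\sigma_s \leq \sigma_a$ is precisely a ``subcriticality/conservativity'' condition that makes $L^1$ estimates close.

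The first step is existence and uniqueness. I would set up the standard iteration: define $f^{(0)} = 0$ and let $f^{(n+1)}$ solve the purely absorbing transport equation $v\cdot\nabla_x f^{(n+1)} = -\sigma_a f^{(n+1)} + \sigma_s \vint{f^{(n)}}$ with $f^{(n+1)}|_{\Gamma_-} = \phi$, solved explicitly along characteristics as in~\eqref{eqn:f1_soln}--\eqref{eqn:f2_soln}. Nonnegativity of each iterate is immediate from the Duhamel formula since $\phi \geq 0$, $\sigma_s \geq 0$. For convergence I would estimate $\norm{f^{(n+1)} - f^{(n)}}$ in $L^1(\Omega\times\Ss^1)$; the gain comes from integrating the equation for the difference $g = f^{(n+1)} - f^{(n)}$: after integrating $v\cdot\nabla_x g = -\sigma_a g + \sigma_s\vint{g^{\rm prev}}$ over $\Omega\times\Ss^1$ and using the divergence theorem together with the zero incoming data for the difference, one gets $\iint_{\Gamma_+} |g| \,\dGamma + \int_\Omega \sigma_a \vint{|g|}\,\rd x \leq \int_\Omega \sigma_s \vint{|g^{\rm prev}|}\,\rd x$, and since $\sigma_s \leq \sigma_a$ one obtains a contraction in the weighted norm $\int_\Omega \sigma_a \vint{|\cdot|}$ provided $\sigma_a \geq \sigma_s + \sigma_0$ strictly, or more carefully one uses that $\sigma_0 > 0$ gives $\sigma_s \leq \sigma_a - \sigma_0 \cdot(\text{something})$ — actually the clean route is to note $\sigma_s/\sigma_a \leq 1$ is not enough for a strict contraction, so instead I would run the iteration with a subtracted lower-order term or invoke a Neumann-series / resolvent argument: the transport operator $(v\cdot\nabla_x + \sigma_a)^{-1}$ composed with multiplication by $\sigma_s$ and velocity averaging is a positivity-preserving operator whose $L^1\to L^1$ norm is $\leq \sup (\sigma_s/\sigma_a) \leq 1$, and strict inequality on a set of positive measure (guaranteed since $\sigma_a \in C(\bar\Omega)$ and... ) combined with the boundary loss term forces the spectral radius to be $< 1$.

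The second step, the bound~\eqref{bound:RTE-general}, follows from the same energy identity applied to $f$ itself rather than to a difference: integrating $v\cdot\nabla_x f = -\sigma_a f + \sigma_s\vint{f}$ over $\Omega\times\Ss^1$, applying the divergence theorem, and using $f \geq 0$ gives
\begin{align*}
  \iint_{\Gamma_+} f \,\dGamma + \int_\Omega (\sigma_a - \sigma_s)\vint{f}\,\rd x = \iint_{\Gamma_-} \phi \,\dGamma = \norm{\phi}_{L^1(\Gamma_-)} \,.
\end{align*}
Since $\sigma_a \geq \sigma_s$ the second term on the left is nonnegative, so in particular $\iint_{\Gamma_+} f \,\dGamma \leq \norm{\phi}_{L^1(\Gamma_-)}$, but to control $\norm{F_f}_{L^1(\Omega)} = \int_\Omega \sigma_s \vint{f}\,\rd x$ I need a lower bound on $\sigma_a - \sigma_s$, which is exactly where I would use $\sigma_0$: if one strengthens or reads the hypothesis as $\sigma_a - \sigma_s \geq$ (a positive fraction times $\sigma_a$), then $\int_\Omega \sigma_s\vint{f}\,\rd x \leq \frac{\sup\sigma_s}{\inf(\sigma_a - \sigma_s)}\norm{\phi}_{L^1(\Gamma_-)}$. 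Absent a pointwise gap between $\sigma_a$ and $\sigma_s$, I would instead bound $\int_\Omega\sigma_s\vint{f}\,\rd x$ directly by writing $f$ via Duhamel and using $\tau_\pm \leq \mathrm{diam}(\Omega)$ together with $e^{-\int \sigma_a} \leq 1$ to get $\vint{f}(x) \lesssim \int_{\Ss^1}\phi(x-\tau_-v,v)\,\rd v + \mathrm{diam}(\Omega)\norm{\sigma_s\vint{f}}_{L^\infty}$ and close a fixed-point estimate in $L^1$ using the change of variables~\eqref{def:change-variable-1}--\eqref{geometry-2} to convert the boundary integral of $\phi$ into $\norm{\phi}_{L^1(\Gamma_-)}$.

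The main obstacle is the borderline nature of the condition $\sigma_a \geq \sigma_s$ (the ``critical'' case): this is exactly the regime where the transport-plus-scattering operator has spectral radius equal to $1$ in $L^1$, so neither the Neumann series nor a naive contraction closes without extracting a genuine gain. The resolution is to exploit the outgoing boundary flux: because $\Omega$ is bounded and strictly convex, every characteristic exits in finite time, so the ``loss'' term $\iint_{\Gamma_+}|g|\,\dGamma$ in the energy identity is strictly positive whenever $g \not\equiv 0$, which should suffice to show the fixed-point map is a strict contraction on the right function space (e.g.\ via a compactness/averaging-lemma argument showing the relevant operator is power-compact with spectral radius $< 1$, or more elementarily via an explicit estimate that every mass fraction entering the domain is eventually absorbed or leaves through $\Gamma_+$). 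I would organize the proof so that this geometric finiteness of exit times does the work, mirroring how Theorem~\ref{thm:RTE-general} itself uses strict convexity, and then simply invoke Theorem~\ref{thm:RTE-general} to conclude.
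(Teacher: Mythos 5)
There is a genuine gap, and it sits exactly at the point you flag as ``the main obstacle'' without resolving it. Your plan hinges on establishing $\norm{F_f}_{L^p(\Omega)} \leq C_0\norm{\phi}_{L^p(\Gamma_-)}$ with $p=1$, but the only coercive term your mass balance produces is $\int_\Omega(\sigma_a-\sigma_s)\vint{f}\,\rd x$, which vanishes identically in the critical case $\sigma_a=\sigma_s$ --- precisely the case the theorem is meant to cover (the hypothesis is $\sigma_a\geq\sigma_s\geq\sigma_0$, not $\sigma_a-\sigma_s\geq\sigma_0$). Your fallback, bounding $\vint{f}$ via Duhamel by $\int_{\Ss^1}\phi\,\rd v + \mathrm{diam}(\Omega)\norm{\sigma_s\vint{f}}_{L^\infty}$, does not close: the constant $\mathrm{diam}(\Omega)\norm{\sigma_s}_{L^\infty}$ is not small, so the right-hand side cannot be absorbed, and the norms are mismatched besides. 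Likewise, the assertion that the boundary loss forces the spectral radius of the scattering operator below $1$ is the statement that needs proving, not a proof; ``operator norm $\leq 1$ plus strict inequality somewhere'' does not by itself give spectral radius $<1$. (For the bare well-posedness the paper simply cites the classical theory in \cite{DL}, so that part of your effort can be outsourced; but the quantitative bound on $F_f$ cannot.)

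The paper's resolution is to take $p=2$ and exploit a different algebraic structure. Multiplying the equation by $2f$ and integrating, the right-hand side reorganizes as
\begin{align*}
  -2\int_\Omega\int_{\Ss^1}\sigma_s\bigl(f-\vint{f}\bigr)^2 - 2\int_\Omega\int_{\Ss^1}(\sigma_a-\sigma_s)f^2 \leq -2\sigma_0\norm{f-\vint{f}}^2_{L^2(\Omega\times\Ss^1)}\,,
\end{align*}
so the coercivity constant is $\sigma_0$ (the lower bound on $\sigma_s$), \emph{not} a lower bound on $\sigma_a-\sigma_s$; this survives the critical case and yields $\norm{f-\vint{f}}_{L^2}\lesssim\norm{\phi}_{L^2(\Gamma_-)}$. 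The remaining step is to upgrade control of the deviation $g=f-\vint{f}$ to control of $f$ itself: since $f\geq 0$, one has $v\cdot\nabla_x f = -\sigma_s g - (\sigma_a-\sigma_s)f \leq -\sigma_s g$, and integrating this differential inequality along characteristics, squaring, and using the change of variables \eqref{def:change-variable-1}--\eqref{geometry-2} gives $\norm{f}_{L^2(\Omega\times\Ss^1)}\lesssim\norm{\phi}_{L^2(\Gamma_-)}$, hence $\norm{\sigma_s\vint{f}}_{L^2}\lesssim\norm{\phi}_{L^2(\Gamma_-)}$, and Theorem~\ref{thm:RTE-general} applies with $p=2$. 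If you want to salvage an $L^1$ version, you would need an honest proof that the residence time is uniformly bounded in the critical regime (e.g.\ via a bounded supersolution of the adjoint problem), which is substantially more work than the two-line $L^2$ identity above.
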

This is the example studied in the original singular decomposition work~\cite{CS98} where the subcritical case with $\sigma_a - \sigma_s > 0$ is considered. We are now able to treat the critical and subcritical cases with $\sigma_a \geq \sigma_s$ in a unified way.

\begin{proof}
Let $\phi$ be a nonnegative incoming data such that $\phi \in L^2(\Gamma_-)$. Then the positivity of $f$ follows from the maximum principle of the linear RTE and the unique solvability is classical \cite{DL}.
In equation~\eqref{eq:RTE-linear} we have $F_f(x) = \sigma_s \vint{f}$.
To obtain an $L^2$-bound of $F_f$, multiply~\eqref{eq:RTE-linear} by $2 f$ and integrate in $(x, v)$. This gives
\begin{align*}
   \int_\Omega \int_{\Ss^1} v \cdot \nabla_x f^2
&= -2 \int_\Omega \int_{\Ss^1} \sigma_s \vpran{f - \vint{f}}^2 
   - 2 \int_\Omega \int_{\Ss^1} \vpran{\sigma_a - \sigma_s} f^2 
\leq
   -2 \sigma_0 \norm{f - \vint{f}}_{L^2(\Omega \times \Ss^1)}^2 \,.
\end{align*}
By integration by parts, the left-hand satisfies
\begin{align*}
   \int_\Omega \int_{\Ss^1} v \cdot \nabla_x f^2
= \int_{\del\Omega}\int_{\Ss^1} (n(x) \cdot v) f^2 
\geq
   \iint_{\Gamma_-} (n(x) \cdot v) f^2 
= - \norm{\phi}_{L^2(\Gamma_-)}^2 \,.
\end{align*}
Combining the above two inequalities we have
\begin{align} \label{bound:ortho-RTE-linear}
   \norm{f - \vint{f}}_{L^2(\Omega \times \Ss^1)}^2
\leq
  \frac{1}{2 \sigma_0} \norm{\phi}_{L^2(\Gamma_-)}^2 \,.
\end{align}
Denote $g = f - \vint{f}$. Since $f \geq 0$, we have
\begin{align} \label{eq:RTE-g}
   v \cdot \nabla f \leq -\sigma_s g \,.
\end{align}
Solving along charateristics, we have
\begin{align*}
   f(x + t v, v)
\leq
   \phi(x, v) - \int_0^t \sigma_s(x + \tau v) g(x + \tau v, v) \dtau \,,
\qquad
   (x, v) \in \Gamma_- \,,
\,\,
   t \in [0, \tau_+(x, v)] \,.
\end{align*}
Hence, for any $   (x, v) \in \Gamma_-$ and $t \in [0, \tau_+(x, v)]$, it holds that
\begin{align*}
   f^2(x + t v, v)
&\leq
   2 \phi^2(x, v) + 2 \vpran{\int_0^{\tau_+(x, v)} \sigma_s(x + \tau v) \abs{g(x + \tau v, v)} \dtau}^2
\\
&\leq
   2 \phi^2(x, v) 
+ 2 \vpran{\text{diam}(\Omega)} \norm{\sigma_s}_{L^\infty(\Omega)}^2
   \int_0^{\tau_+(x, v)} g^2(x + \tau v, v) \dtau \,.
\end{align*}
Integrating in $(x, v) \in \Gamma_-$ and $t \in [0, \tau_+(x, v)]$, we obtain that
\begin{align*}
  \iint_{\Gamma_-} \int_0^{\tau_+(x, v)} 
  f^2(x+tv, v) \dt {\rm d}\Gamma_-
&\leq
  2 \vpran{\text{diam}(\Omega)} \norm{\phi}_{L^2(\Gamma_-)}^2
\\
& \quad \,  
+ 2 \vpran{\text{diam}(\Omega)}^2 \norm{\sigma_s}_{L^\infty(\Omega)}^2
   \iint_{\Gamma_-}\int_0^{\tau_+(x, v)} g^2(x + \tau v, v) \dtau {\rm d}\Gamma_- \,.
\end{align*}
Using a similar changing of variables as in~\eqref{def:change-variable-1} by letting $z = x + \tau v$, we then derive that
\begin{align*}
  \norm{f}_{L^2(\Omega \times \Ss^1)}^2
\leq
  C_\Omega \norm{\phi}_{L^2(\Gamma_-)}^2
  + C_\Omega \norm{g}_{L^2(\Omega \times \Ss^1)}^2
\leq
  C_\Omega \norm{\phi}_{L^2(\Gamma_-)}^2 \,,
\end{align*}
where the last inequality follows from~\eqref{bound:ortho-RTE-linear}. 
Hence, we derive that
\begin{gather*}
   \norm{F_f}_{L^2(\Omega \times \Ss^1)}
= \norm{\sigma_s \vint{f}}_{L^2(\Omega \times \Ss^1)}
\leq
   \norm{\sigma_s}_{L^\infty(\Omega)} \norm{f}_{L^2(\Omega \times \Ss^1)}
\leq
  C_\Omega \norm{\phi}_{L^2(\Gamma_-)}^2 \,, 
\end{gather*}
which combined with Theorem~\ref{thm:RTE-general} gives the desired unique solvability of $\sigma_a$.
\end{proof}

In the second example we consider a nonlinear RTE, which couples the temperature and the intensity of the rays. The equation has the form \cite{Klar_nonlinear_RTE}:
\begin{align}
   v \cdot \nabla_x I &= -\sigma_a I + \sigma_a T^4 \,, \label{eq:RTE-nonlinear-1}
\\
   \Delta_x T &= \sigma_a T^4 - \sigma_a \vint{I} \,. \label{eq:RTE-nonlinear-2}
\end{align}
The statement of the unique solvability of $\sigma_a$ in~\eqref{eq:RTE-nonlinear-1}-\eqref{eq:RTE-nonlinear-2} is
\begin{thm} \label{thm:nonlinear-RTE}
Suppose $\Omega$ is a convex and bounded domain with a $C^1$ boundary. Suppose there exists a constant $\sigma_0 > 0$ such that
\begin{align*}
   \sigma_a \geq \sigma_0 > 0 \,,
\qquad
    \sigma_a \in C(\bar \Omega) \,,
\end{align*}
Then with proper choices of the incoming data, the absorption coefficient $\sigma_a$ can be uniquely recovered from the measurement of the outgoing data. 
\end{thm}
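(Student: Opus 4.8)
The plan is to verify the two hypotheses of Theorem~\ref{thm:RTE-general} for the coupled system \eqref{eq:RTE-nonlinear-1}--\eqref{eq:RTE-nonlinear-2}, which I supplement with the cold-wall condition $T|_{\del\Omega}=0$, taking $p=1$; here the role of $f$ is played by $I$ and $F_f=\sigma_a T^4$. Taking $p=1$ is convenient because for the concentrated incoming beams used to prove Theorem~\ref{thm:RTE-general} one has $\norm{\phi}_{L^1(\Gamma_-)}=O(1)$, so the bound $\norm{F_f}_{L^1(\Omega)}\leq C_0\norm{\phi}_{L^1(\Gamma_-)}$ makes $\delta\norm{F_f}_{L^1(\Omega)}\to0$ as $\delta\to0$, which is exactly what the last step of that proof needs (in its notation $\lim_{\Eps\to0}\lim_{\delta\to0}\Eps^{-(p-1)/p}\delta^{1/p}=\lim_{\delta\to0}\delta=0$).

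I would first prove the bound $\norm{F_f}_{L^1(\Omega)}\leq C_0\norm{\phi}_{L^1(\Gamma_-)}$ with $C_0=C_0(\Omega,\sigma_a)$, since this is the crux and also drives the well-posedness argument. Assuming $I\geq0$ and $T\geq0$ (justified below), two structural facts combine. First, the cold wall is dissipative: integrating \eqref{eq:RTE-nonlinear-2} over $\Omega$ and using that $T\geq0$ vanishes on $\del\Omega$ (so $\int_\Omega\Delta T=\int_{\del\Omega}\del_n T\leq0$) gives $\int_\Omega\sigma_a T^4\leq\int_\Omega\sigma_a\vint{I}$. Second, the absorption along each ray is a strict fraction of the input: since $\int_0^\tau\sigma_a(y+sv)\,e^{-\int_0^s\sigma_a(y+uv)\,\rd u}\,\rd s=1-e^{-\int_0^\tau\sigma_a(y+uv)\,\rd u}\leq1-e^{-\norm{\sigma_a}_{L^\infty}\,\text{diam}(\Omega)}=:\kappa<1$, splitting $I=I_1+I_2$ as in \eqref{eqn:f1}--\eqref{eqn:f2}, solving along characteristics, and using the changes of variables \eqref{def:change-variable-1}--\eqref{geometry-2} yields $\int_\Omega\int_{\Ss^1}\sigma_a I_1\leq\kappa\norm{\phi}_{L^1(\Gamma_-)}$ and $\int_\Omega\int_{\Ss^1}\sigma_a I_2\leq\kappa\norm{\sigma_a T^4}_{L^1(\Omega)}$ (the source of $I_2$ being $F_f=\sigma_a T^4$). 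Chaining these, $\norm{F_f}_{L^1(\Omega)}=\int_\Omega\sigma_a T^4\leq\int_\Omega\sigma_a\vint{I}=\int_\Omega\int_{\Ss^1}\sigma_a I\leq\kappa(\norm{\phi}_{L^1(\Gamma_-)}+\norm{F_f}_{L^1(\Omega)})$, so $\norm{F_f}_{L^1(\Omega)}\leq\tfrac{\kappa}{1-\kappa}\norm{\phi}_{L^1(\Gamma_-)}$ with $C_0=e^{\norm{\sigma_a}_{L^\infty}\text{diam}(\Omega)}-1$, independent of $\phi$ and $f$.

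Next comes well-posedness. For $\rho\geq0$ in $L^1(\Omega)$ the semilinear problem $-\Delta T+\sigma_a T^4=\sigma_a\rho$, $T|_{\del\Omega}=0$, has a unique solution because $u\mapsto-\Delta u+\sigma_a u^4$ is strictly monotone on $H^1_0(\Omega)$; testing the difference equation with the positive part of the difference and using $(u^4-w^4)(u-w)\geq0$ gives the comparison principle, hence $T\geq0$ (compare to $\rho\equiv0$) and monotone dependence $\rho\mapsto T$. For $T\geq0$ the transport equation \eqref{eq:RTE-nonlinear-1} with datum $\phi\geq0$ has the explicit nonnegative solution $I=I_1+I_2$. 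Thus the composite $\Phi:\rho\mapsto\vint{I}$ maps the nonnegative cone to itself; it is compact because $\rho\mapsto T$ smooths by elliptic regularity and Rellich, and $\sigma_a T^4\mapsto\vint{I}$ smooths by the averaging lemma. Deforming the coupling strength to zero and noting that every fixed point of the deformed equation $\rho=\Phi_t(\rho)$, $t\in[0,1]$, satisfies $\norm{\rho}_{L^1(\Omega)}\leq\sigma_0^{-1}\tfrac{\kappa}{1-\kappa}\norm{\phi}_{L^1(\Gamma_-)}$ by the same computation as above (using the elliptic bound $\int_\Omega\sigma_a T^4\leq t\int_\Omega\sigma_a\rho$), the Leray--Schauder degree argument produces a fixed point, i.e.\ a nonnegative solution $(I,T)$; uniqueness in the natural class follows from an energy/contraction estimate built on the comparison principles and the coercivity $\sigma_a\geq\sigma_0>0$. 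With this and the $L^1$ bound, Theorem~\ref{thm:RTE-general} applies and $\sigma_a$ is uniquely reconstructible.

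The main obstacle I anticipate is making the existence argument airtight --- organizing the fixed-point scheme so that the radiation--temperature feedback $\rho\mapsto\sigma_a T^4\mapsto\vint{I}$ is genuinely controlled (the $L^1$ estimate above is precisely what forbids blow-up, but one must check it uniformly along the deformation, verify that the relevant fixed point sits in a fixed ball, and confirm that the averaging lemma delivers the needed compactness), together with the routine but slightly delicate elliptic-regularity bookkeeping for the merely $L^1$ source $\sigma_a\rho$ in two dimensions; by contrast, once the comparison principle and the characteristic identity are in hand, the positivity and the $L^1$ bound themselves are short.
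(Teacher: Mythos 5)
Your proposal is correct, but it reaches the hypotheses of Theorem~\ref{thm:RTE-general} by a genuinely different route than the paper. The paper works with $p=2$ and obtains $\norm{\sigma_a T^4}_{L^2(\Omega)} \lesssim \norm{\phi}_{L^2(\Gamma_-)}$ by an energy method: multiply \eqref{eq:RTE-nonlinear-3} by $I$ and \eqref{eq:RTE-nonlinear-4} by $T^4$, subtract, and use the coercivity $\sigma_a \geq \sigma_0$ to control $\norm{\vint{I}-T^4}_{L^2}$ and $\norm{I - \vint{I}}_{L^2}$, then recover $\norm{I}_{L^2}$ along characteristics. You instead take $p=1$ and close an $L^1$ mass balance: the cold wall gives $\int_\Omega \sigma_a T^4 \leq \int_\Omega \sigma_a \vint{I}$, and the exact identity $\int_0^\tau \sigma_a e^{-\int_0^s\sigma_a} \, \mathrm{d}s = 1 - e^{-\int_0^\tau \sigma_a} \leq \kappa < 1$ turns the radiation--temperature feedback into a strict contraction with factor $\kappa$; I checked the two characteristic computations and the chaining $\norm{F_f}_{L^1} \leq \kappa(\norm{\phi}_{L^1} + \norm{F_f}_{L^1})$, and they are sound. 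Your version buys two things: the constant $e^{\norm{\sigma_a}_{L^\infty}\mathrm{diam}(\Omega)}-1$ does not involve the lower bound $\sigma_0$ at all (the paper's does), and $p=1$ makes the final limit in Theorem~\ref{thm:RTE-general} trivially $\delta \to 0$ with no $\Eps$ penalty. The paper's $L^2$ route buys cleaner functional analysis: with $\phi \in L^2$ the elliptic equation has an $L^2$ right-hand side and the well-posedness bookkeeping is standard, whereas your $L^1$ framework forces you into Brezis--Strauss-type theory for $-\Delta T + \sigma_a T^4 = \sigma_a\rho$ with $\rho \in L^1$ and into trace issues when integrating $\Delta T$ over $\Omega$ (the normal derivative of a $W^{1,q}$, $q<2$, function is not classically defined). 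The cheapest repair is to observe that the incoming data actually used in the proof of Theorem~\ref{thm:RTE-general} are bounded for each fixed $\Eps,\delta$, so the $L^\infty$ well-posedness theory of Appendix~\ref{sec:appendix-1} (sub/supersolutions plus Schauder, which also delivers $T \geq 0$ and $T \in W^{2,\infty}$, legitimizing your divergence-theorem step) supplies existence and regularity, and only the \emph{a priori estimate} needs to be phrased in $L^1$; with that adjustment your Leray--Schauder deformation is unnecessary and the argument is complete.
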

\begin{proof}
Given an incoming data $\phi$ for $I$ and a zero boundary condition $T_B$ for $T$, we show the well-posedness of~\eqref{eq:RTE-nonlinear-1}-\eqref{eq:RTE-nonlinear-2} in Appendix~\ref{sec:appendix-1}.  The non-negativity of $I$ follows directly from the observation that $\sigma_a T^4 \geq 0$. Now we have $F_f = \sigma_a T^4$ and we want to show that there exists a constant $C_0$ such that
\begin{align} \label{bound:F-f-RTE-nonlinear}
    \norm{\sigma_a T^4}_{L^2(\Omega)} 
 \leq
    C_0 \norm{\phi}_{L^2(\Gamma_-)} \,.
\end{align}
Such $L^2$-bound can be obtained by the energy method along a similar line as in \cite{Klar_nonlinear_RTE}. For the convenience of the reader we include the details here. The full equation with the boundary conditions reads
\begin{align}
   v \cdot \nabla_x I &= -\sigma_a I + \sigma_a T^4 \,, 
\qquad
   I \big|_{\Gamma_-} = \phi(x, v) \,,
   \label{eq:RTE-nonlinear-3}
\\
   \Delta_x T &= \sigma_a T^4 - \sigma_a \vint{I},
\qquad
   T \big|_{\del\Omega} = 0 \,.
   \label{eq:RTE-nonlinear-4}
\end{align} 
Multiply~\eqref{eq:RTE-nonlinear-3} by $I$ and~\eqref{eq:RTE-nonlinear-4} by $T^4$. Then integrate both equations in $(x, v)$ and take their difference. By rearranging terms we get
\begin{align*}
   \frac{1}{2}\int_{\del \Omega} \int_{\Ss^1} (n(x) \cdot v) I^2 
   + 4 \int_\Omega T^3 \abs{\nabla_x T}^2 \nn
&= - \int_\Omega \int_{\Ss^1} \sigma_a I^2 
   + 2\int_\Omega \int_{\Ss^1} \sigma_a \vint{I} T^4 
   -  \int_\Omega \int_{\Ss^1} \sigma_a T^8\,.
\\
& = - \int_\Omega \int_{\Ss^1} \sigma_a 
      \vpran{\vint{I} - T^4}^2
      - \int_\Omega \int_{\Ss^1} \sigma_a
         \vpran{I^2 - \vint{I}^2} \,.
\end{align*}
where it has been shown in Theorem~\ref{thm:well-posed-nonlinear-RTE} that $T \geq 0$ given $\phi$ non-negative. Dropping the term involving $T^3$, we have
\begin{align} \label{est:RTE-nonlinear-1}
   \sigma_0 \norm{\vint{I} - T^4}_{L^2(\Omega)}^2
\leq
   \int_\Omega \int_{\Ss^1} \sigma_a 
      \vpran{\vint{I} - T^4}^2
\leq
   \frac{1}{2} \norm{\phi}_{\Gamma_-}^2
\end{align}
and
\begin{align} \label{est:RTE-nonlinear-2}
      \sigma_0 \norm{I - \vint{I}}_{L^2(\Omega)}^2
\leq \int_\Omega \int_{\Ss^1} \sigma_a 
      \vpran{I - \vint{I}}^2
= \int_\Omega \int_{\Ss^1} \sigma_a 
      \vpran{I^2 - \vint{I}^2}
\leq
   \frac{1}{2} \norm{\phi}_{\Gamma_-}^2 \,.
\end{align}
Combining~\eqref{est:RTE-nonlinear-1} with~\eqref{est:RTE-nonlinear-2}, we obtain that
\begin{align*}
   \norm{\sigma_a (I - T^4)}_{L^2(\Omega \times \Ss^1)}
\leq
  \norm{\sigma_a}_{L^\infty}
  \vpran{\norm{I - \vint{I}}_{L^2(\Omega \times \Ss^1)}
  + \norm{\vint{I} - T^4}_{L^2(\Omega \times \Ss^1)}}
\leq
  \frac{\norm{\sigma_a}_{L^\infty}}{\sigma_0}
  \norm{\phi}_{\Gamma_-}^2 \,.
\end{align*}
Since $\sigma_a (T^4 - I)$ is simply the forcing term in~\eqref{eq:RTE-nonlinear-3}, we can apply the bound for~\eqref{eq:RTE-g} to derive that 
\begin{align*}
   \norm{I}_{L^2(\Omega \times \Ss^1)}
\leq
  C_\Omega \norm{\phi}^2_{\Gamma_-} \,.
\end{align*}
This implies that
\begin{align*}
   \norm{\sigma_a T^4}_{L^2(\Omega)}
\leq
  \norm{\sigma_a}_{L^\infty}
  \vpran{\norm{I}_{L^2(\Omega \times \Ss^1)}
  + \norm{I - T^4}_{L^2(\Omega \times \Ss^1)}}
\leq
  C_\Omega(\norm{\sigma_a}_{L^\infty}, \sigma_0) \norm{\phi}_{\Gamma_-}^2 \,,
\end{align*}
which is the desired bound in~\eqref{bound:F-f-RTE-nonlinear}. The unique solvability of $\sigma_a$ then follows from Theorem~\ref{thm:RTE-general}.
\end{proof}

\section{Recovery of the Scattering Coefficient: Averaging Lemma} \label{sec:scattering}
In this section, we show how to use the celebrated averaging lemma for kinetic equations to recover the scattering coefficient. We will work out a specific example as an illustration. The equation under consideration is
~\eqref{eq:RTE-linear}, which we recall as
\begin{align} \label{eq:RTE-linear-recall}
   v \cdot \nabla_x f = - \sigma_a f + \sigma_s \vint{f} \,.
\end{align}
Since $\sigma_a$ has been found by Theorem~\ref{thm:RTE-linear}, in what follows we assume that $\sigma_a$ is given and focus on finding~$\sigma_s$.

First we recall the statement of the averaging lemma. For the purpose of the current work, we only need the most basic version which is stated as
\begin{thm}[\cite{averaging,averaging_Lions,Bezard_averaging}]\label{thm:averaging}
Suppose $0 < \sigma_0 \leq \sigma_s \leq \sigma_a$ with $\sigma_a \in C(\bar\Omega)$ where $\Omega$ is open and bounded. Suppose $\phi \in L^p(\Gamma_-)$ and $g \in L^p(\Omega \times \Ss^1)$ for some $p > 1$ and $f$ satisfies the equation
\begin{align} \label{eq:RTE-linear-full-recall}
   v \cdot \nabla_x f = - \sigma_a f + \sigma_s \vint{f} + g \,,
\qquad
  f \big|_{\Gamma_-} = \phi(x, v) \,.
\end{align}
Then for any $\gamma \leq \inf \{\frac{1}{p}, 1 - \frac{1}{p}\}$, the velocity average of $f$ satisfies $\vint{f} \in W^{\gamma, p}(\Omega)$ with the bound
\begin{align*}
   \norm{\vint{f}}_{W^{\gamma, p}(\Omega)}
\leq
   C_0 \vpran{\norm{\phi}_{L^p(\Gamma_-)} 
                      + \norm{g}_{L^p(\Omega \times \Ss^1)}} \,.
\end{align*}
\end{thm}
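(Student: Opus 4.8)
The plan is to decouple the claim into two parts: an $L^p$ a priori estimate bounding the solution $f$ --- hence the ``source'' $G := -\sigma_a f + \sigma_s \vint{f} + g$ --- by the data $\phi$ and $g$ alone, and then an application of the classical velocity averaging lemma for the free transport operator $v\cdot\nabla_x$, which is the content of~\cite{averaging,averaging_Lions,Bezard_averaging}. For the second part one writes~\eqref{eq:RTE-linear-full-recall} as $v\cdot\nabla_x f = G$ and uses that the averaging lemma gives $\vint{f}\in W^{\gamma,p}(\Omega)$ with $\norm{\vint f}_{W^{\gamma,p}(\Omega)}\le C_0\big(\norm{f}_{L^p(\Omega\times\Ss^1)}+\norm{G}_{L^p(\Omega\times\Ss^1)}\big)$ for $\gamma\le\inf\{\tfrac1p,1-\tfrac1p\}$; the hypothesis $p>1$ is exactly where this gain of regularity (and the Hölder duality used below) is available. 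So everything reduces to the a priori bound $\norm{f}_{L^p(\Omega\times\Ss^1)}\le C_0\big(\norm{\phi}_{L^p(\Gamma_-)}+\norm{g}_{L^p(\Omega\times\Ss^1)}\big)$.

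\emph{Step 1 (the $L^p$ bound).} This is the heart of the matter; it is classical ($L^p$ well-posedness of the stationary transport equation on bounded domains, cf.~\cite{DL}), but it can also be obtained directly by an energy estimate exploiting $\sigma_s\le\sigma_a$. Multiplying~\eqref{eq:RTE-linear-full-recall} by $\abs{f}^{p-2}f$ and integrating over $\Omega\times\Ss^1$, the transport term integrates by parts to $\tfrac1p\norm{f|_{\Gamma_+}}_{L^p(\Gamma_+)}^p-\tfrac1p\norm{\phi}_{L^p(\Gamma_-)}^p$, and the collision term produces $\int_\Omega(\sigma_a-\sigma_s)\vint{\abs{f}^p}$ together with $\int_\Omega\sigma_s\vint{\big(\abs{f}^{p-2}f-\abs{\vint f}^{p-2}\vint f\big)(f-\vint f)}$, the latter being nonnegative by the monotonicity of $t\mapsto\abs{t}^{p-2}t$ and bounding $\norm{f-\vint f}_{L^p(\Omega\times\Ss^1)}$ from below (cleanly for $p\ge2$; for $1<p<2$ the coercivity is degenerate and needs an extra Hölder interpolation). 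Estimating $\int g\,\abs{f}^{p-2}f$ by $\norm{g}_{L^p}\norm{f}_{L^p}^{p-1}$ then gives $\norm{f-\vint f}_{L^p}^p\le C_0\big(\norm{\phi}_{L^p(\Gamma_-)}^p+\norm{g}_{L^p}\norm{f}_{L^p}^{p-1}\big)$, regardless of whether $\sigma_a>\sigma_s$ or $\sigma_a\equiv\sigma_s$. To upgrade this to a bound on $\norm{f}_{L^p}$ itself, I would use the characteristic form of $v\cdot\nabla_x f+\sigma_a f=\sigma_s\vint f+g$: since $\sigma_s\le\sigma_a$ the right-hand side is pointwise $\le\sigma_a\abs{f}+\sigma_s\abs{f-\vint f}+\abs{g}$, and a Gronwall inequality along each characteristic exactly cancels the absorption, giving $\abs{f}(x+tv,v)\le\abs{\phi}(x,v)+\int_0^{\tau_+(x,v)}\big(\sigma_s\abs{f-\vint f}+\abs{g}\big)(x+sv,v)\,ds$ --- the signed-data analogue of the step used in the proof of Theorem~\ref{thm:RTE-linear}, which there relied on $f\ge0$. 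Raising to the $p$-th power, integrating, and changing variables $z=x+sv$ as in~\eqref{def:change-variable-1} (using $\text{diam}(\Omega)<\infty$) yields $\norm{f}_{L^p}^p\le C_0\big(\norm{\phi}_{L^p(\Gamma_-)}^p+\norm{f-\vint f}_{L^p}^p+\norm{g}_{L^p}^p\big)$; combining with the energy estimate and absorbing $\norm{f}_{L^p}$ via Young's inequality closes the bound.

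\emph{Step 2 (averaging) and the main obstacle.} With $G\in L^p(\Omega\times\Ss^1)$ now controlled, it remains to feed $v\cdot\nabla_x f=G$, $f|_{\Gamma_-}=\phi$, into the averaging lemma. If the cited form already treats the inflow boundary value problem on a bounded domain, one is done; otherwise I would reduce to $\R^2$ using strict convexity of $\Omega$: every characteristic exits $\partial\Omega$ transversally without returning, so $f$ extends across $\Gamma_+$ into a fixed collar of $\partial\Omega$ by solving $v\cdot\nabla_x\tilde f=-\sigma_a\tilde f$ there (with $\sigma_a$ extended continuously), and after multiplication by a smooth cutoff $\chi\equiv1$ on $\bar\Omega$ the product $\chi\tilde f$ solves a free-transport equation on $\R^2$ with an $L^p$ right-hand side --- no boundary surface measure appears --- whose norm, and that of $\chi\tilde f$, is controlled by $\norm{\phi}_{L^p(\Gamma_-)}+\norm{g}_{L^p}$ thanks to Step 1 (in particular the outflow trace bound on $\Gamma_+$ is what makes the extension lie in $L^p$). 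The averaging lemma then gives $\vint{\chi\tilde f}\in W^{\gamma,p}(\R^2)$ with the stated bound, and restriction to $\Omega$, where $\chi\tilde f=f$, finishes the proof. I expect the main obstacle to be Step 1 --- making the $L^p$ estimate uniform across the subcritical and critical regimes without assuming positivity of $f$: when $\sigma_a\equiv\sigma_s$ the weighted term $\int(\sigma_a-\sigma_s)\vint{\abs{f}^p}$ vanishes and the energy estimate controls only $f-\vint f$, so the remaining control of $\norm{f}_{L^p}$ must be routed through the characteristic/Gronwall representation rather than through coercivity. The whole-space reduction in Step 2 is comparatively routine given the convexity of $\Omega$, and the averaging lemma itself is quoted.
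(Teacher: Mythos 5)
The paper does not actually prove Theorem~\ref{thm:averaging}: it is quoted as a known result from the cited references, so there is no in-paper argument to compare against. Judged on its own terms, your two-step reduction is the right one and is essentially how one would assemble the stated form from the literature: (i) an a priori bound $\norm{f}_{L^p(\Omega\times\Ss^1)}\leq C_0(\norm{\phi}_{L^p(\Gamma_-)}+\norm{g}_{L^p(\Omega\times\Ss^1)})$ --- which is precisely the content of the separately cited Theorem~\ref{thm:L-p}, so in the paper's logic you could simply invoke that rather than re-derive it --- followed by (ii) the whole-space averaging lemma applied to $v\cdot\nabla_x f=G$ with $G=-\sigma_a f+\sigma_s\vint{f}+g\in L^p$. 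Your energy identity in Step 1 is correct (writing the collision operator as $-(\sigma_a-\sigma_s)f-\sigma_s(f-\vint{f})$ and using that $\int_{\Ss^1}(f-\vint{f})\,(|f|^{p-2}f-|\vint{f}|^{p-2}\vint{f})\,\rd v\geq 0$), and the Gronwall-along-characteristics upgrade from $\norm{f-\vint{f}}$ to $\norm{f}$ correctly parallels the paper's own treatment of~\eqref{eq:RTE-g} while removing the sign assumption on $f$.

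Two concrete points need attention before this is a complete proof. First, the coercivity for $1<p<2$ is not a cosmetic issue here: the application in Theorem~\ref{thm:recover-scattering} specifically takes $r\in(1,2)$, and for such $p$ the monotone term only controls a weighted $L^2$-type quantity, so the ``extra H\"older interpolation'' you defer is genuinely needed (or one should route the whole of Step 1 through the characteristic/Duhamel representation, which is closer to what \cite{Egger_Lp} does and avoids the degenerate coercivity altogether). Second, your whole-space reduction extends $f$ only across $\Gamma_+$; if $\chi\tilde f$ is left equal to zero on the incoming side, the distribution $v\cdot\nabla_x(\chi\tilde f)$ picks up the surface measure $\phi\,\abs{n(x)\cdot v}\,\delta_{\del\Omega}$ coming from the jump of $f$ to $0$ across $\Gamma_-$, which is not in $L^p(\R^2\times\Ss^1)$ and would invalidate the application of the averaging lemma. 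The fix is the same device you used on the outgoing side: extend backward from $\Gamma_-$ by solving the pure absorption equation with datum $\phi$ in the collar, with the $L^p$ norm of the extension controlled via the $\abs{n\cdot v}\,\rd S_x\,\rd v\,\rd t$ change of variables by $\norm{\phi}_{L^p(\Gamma_-)}$. With those two repairs the argument closes.
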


We also recall the basic $L^p$ energy estimate \cite{Egger_Lp} for equation~\eqref{eq:RTE-linear-full-recall}:
\begin{thm}[\cite{Egger_Lp}]\label{thm:L-p}
Suppose $\phi \in L^p(\Gamma_-)$ and $g \in L^p(\Omega \times \Ss^1)$ for some $p \in [1, \infty]$. Then $f \in L^p(\Omega \times \Ss^1)$ with the bound
\begin{align*}
    \norm{f}_{L^p(\Omega \times \Ss^1)}
\leq
   C_0 \vpran{\norm{\phi}_{L^p(\Gamma_-)} 
                      + \norm{g}_{L^p(\Omega \times \Ss^1)}} \,.
\end{align*}

\end{thm}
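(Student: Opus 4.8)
The plan is to establish the estimate at the two endpoints $p=\infty$ and $p=1$ and then obtain all intermediate $p$ by interpolation, using that the solution map $(\phi,g)\mapsto f$ of~\eqref{eq:RTE-linear-full-recall} is linear; no positivity and no regularity of $\sigma_a$ beyond the two‑sided bound $0<\sigma_0\le\sigma_a$ (together with $0\le\sigma_s\le\sigma_a$) is needed for the bound itself.

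For $p=\infty$ I would argue directly from the mild (Duhamel) formulation along characteristics,
\[
   f(x,v) = e^{-\int_0^{\tau_-}\sigma_a(x-\tau v)\rd\tau}\,\phi(x-\tau_- v,v) + \int_0^{\tau_-} e^{-\int_0^s \sigma_a(x-\tau v)\rd\tau}\,\bigl(\sigma_s\vint{f}+g\bigr)(x-sv)\,\rd s \,.
\]
The crucial observation is that $0\le\sigma_s\le\sigma_a$ forces the scattering kernel in the Duhamel integral to have total weight strictly below one,
\[
   \int_0^{\tau_-} e^{-\int_0^s \sigma_a(x-\tau v)\rd\tau}\,\sigma_s(x-sv)\,\rd s \le \int_0^{\tau_-}\tfrac{\rd}{\rd s}\Bigl(-e^{-\int_0^s \sigma_a(x-\tau v)\rd\tau}\Bigr)\rd s = 1-e^{-\int_0^{\tau_-}\sigma_a} \le 1-e^{-\norm{\sigma_a}_{L^\infty(\Omega)}\,\text{diam}(\Omega)} =: \kappa < 1 \,,
\]
while $\sigma_a\ge\sigma_0$ gives $\int_0^{\tau_-} e^{-\int_0^s \sigma_a}\rd s \le 1/\sigma_0$. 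Since $\norm{\vint{f}}_{L^\infty(\Omega)}\le\norm{f}_{L^\infty(\Omega\times\Ss^1)}$, the map sending $h$ to the right‑hand side above with $\vint{h}$ in place of $\vint{f}$ is a contraction on $L^\infty(\Omega\times\Ss^1)$ with Lipschitz constant $\kappa$; its unique fixed point is $f$ and obeys $\norm{f}_{L^\infty}\le(1-\kappa)^{-1}\bigl(\norm{\phi}_{L^\infty(\Gamma_-)}+\sigma_0^{-1}\norm{g}_{L^\infty}\bigr)$. The argument is insensitive to $v\mapsto-v$, so the same bound holds for the adjoint transport problem $-v\cdot\nabla_x\psi = -\sigma_a\psi+\sigma_s\vint{\psi}+h$ with $\psi|_{\Gamma_+}=0$, including the trace bound $\norm{\psi}_{L^\infty(\Gamma_-)}\le C\norm{h}_{L^\infty}$.

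For $p=1$ I would use duality. Given $h\in L^\infty(\Omega\times\Ss^1)$, let $\psi$ solve the adjoint problem above. Multiplying~\eqref{eq:RTE-linear-full-recall} by $\psi$, integrating, and invoking the transport Green identity $\int_{\Omega\times\Ss^1}\bigl((v\cdot\nabla_x f)\psi+f(v\cdot\nabla_x\psi)\bigr)=\int_{\del\Omega\times\Ss^1}(n\cdot v)f\psi$ with the boundary conditions $\psi|_{\Gamma_+}=0$, $f|_{\Gamma_-}=\phi$, the two scattering terms cancel — here one uses that $\sigma_s$ is isotropic, so $\int\sigma_s f\vint{\psi}=\int\sigma_s\vint{f}\vint{\psi}=\int\sigma_s\psi\vint{f}$ — leaving
\[
   \int_{\Omega\times\Ss^1} f h = \iint_{\Gamma_-}\abs{n\cdot v}\,\psi\,\phi\,\rd S_x\rd{v} + \int_{\Omega\times\Ss^1}\psi\,g \,.
\]
Estimating the right side by $\norm{\psi}_{L^\infty(\Gamma_-)}\norm{\phi}_{L^1(\Gamma_-)}+\norm{\psi}_{L^\infty}\norm{g}_{L^1}$, using the adjoint $L^\infty$ bounds from the previous step, and taking the supremum over $\norm{h}_{L^\infty}\le1$ gives $\norm{f}_{L^1(\Omega\times\Ss^1)}\le C\bigl(\norm{\phi}_{L^1(\Gamma_-)}+\norm{g}_{L^1}\bigr)$.

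With both endpoints in hand, Riesz--Thorin interpolation applied to the linear map $(\phi,g)\mapsto f$ (viewing $L^p(\Gamma_-)\times L^p(\Omega\times\Ss^1)$ as an $L^p$ space over a disjoint union of measure spaces) yields boundedness into $L^p(\Omega\times\Ss^1)$ for every $p\in[1,\infty]$, with constant a geometric mean of the two endpoint constants — exactly the stated bound. I expect the $p=1$ case to be the genuine difficulty: a straightforward energy estimate, multiplying the equation by $p\abs{f}^{p-2}f$, only controls the outgoing trace $\norm{f}_{L^p(\Gamma_+;\,\abs{n\cdot v})}$ and not the interior norm, precisely because in the critical regime $\sigma_s\equiv\sigma_a$ there is no interior absorption left over to dominate the scattering gain; the duality step circumvents this by transferring the estimate to the adjoint, where the escape of particles through the boundary is what produces the saving factor $\kappa<1$. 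One should also check that all functions appearing have the boundary traces needed to legitimize the integration by parts, but this is routine in the $L^p$ transport setting. (For $1<p<\infty$ only, an alternative to the last two steps is to combine Theorem~\ref{thm:averaging}, which gives $\vint{f}\in W^{\gamma,p}(\Omega)$ and hence compactness in $L^p(\Omega)$, with uniqueness via the maximum principle, showing that $I$ minus the scattering‑return operator is Fredholm of index zero and boundedly invertible; this, however, does not reach the endpoints.)
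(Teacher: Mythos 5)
The paper does not prove Theorem~\ref{thm:L-p}: it is quoted verbatim from the reference \cite{Egger_Lp} and used as a black box, so there is no internal proof to compare against. Your proposal is, as far as I can check, a correct self-contained derivation, and its architecture (endpoints plus interpolation) is sound. The $p=\infty$ step is right: since $\sigma_s\le\sigma_a$ pointwise, the Duhamel scattering weight is bounded by $1-e^{-\norm{\sigma_a}_{L^\infty}\mathrm{diam}(\Omega)}=\kappa<1$ uniformly in $(x,v)$, so the mild formulation is a strict contraction on $L^\infty(\Omega\times\Ss^1)$; note that this is exactly where the boundedness of $\Omega$ and the continuity (hence boundedness) of $\sigma_a$ enter. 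The $p=1$ duality step is also correct: after the velocity reflection $v\mapsto -v$ the adjoint problem is again of the form covered by the $L^\infty$ estimate (the velocity average is invariant under reflection), the absorption terms cancel exactly, and the two scattering terms cancel because $\sigma_s$ is isotropic --- a hypothesis you correctly flag as essential. Your diagnosis that the naive $L^1$ energy estimate only controls the outgoing trace in the critical case $\sigma_s\equiv\sigma_a$ is accurate, and it mirrors the difficulty the authors face in their $L^2$ argument for Theorem~\ref{thm:RTE-linear}, where they recover the interior norm by integrating the defect $f-\vint{f}$ along characteristics rather than by duality; your adjoint route is the natural generalization to $p=1$. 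Two small points deserve a sentence in a full write-up: Riesz--Thorin should be applied to the solution map restricted to (say) $L^\infty$ data, where existence is already secured, with the general case recovered by density and the a priori bound (and the equivalence, up to a factor $2$, of the sum norm on $L^p(\Gamma_-)\times L^p(\Omega\times\Ss^1)$ with the $L^p$ norm on the disjoint union); and the Green identity requires the standard transport trace theory to justify the boundary terms, which you acknowledge. Neither affects the validity of the argument.
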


Our main result in this part is
\begin{thm}\label{thm:recover-scattering}
Let $\Omega \subseteq \R^2$ be a strictly convex and bounded domain with a $C^1$ boundary. Suppose $0 < \sigma_0 \leq \sigma_s \leq \sigma_a$ with $\sigma_a \in C(\bar\Omega)$ given.
Then with proper choices of the incoming data, the scattering coefficient $\sigma_s$ in~\eqref{eq:RTE-linear-recall} can be uniquely recovered from the measurement of the outgoing data.  
\end{thm}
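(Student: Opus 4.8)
The plan is to run the singular-decomposition scheme of Theorem~\ref{thm:RTE-general} one order further, so as to expose the \emph{single-scattering} part of the outgoing data: this is the part that sees $\sigma_s$, while the ballistic part sees only the given $\sigma_a$ and the multiple-scattering part will be controlled by the averaging lemma. Given incoming data $\phi$, write the solution of~\eqref{eq:RTE-linear-recall} as $f = f_1 + f_{2,1} + f_{2,2}$, where $f_1$ is ballistic, $v \cdot \nabla_x f_1 = -\sigma_a f_1$ with $f_1|_{\Gamma_-} = \phi$; where $f_{2,1}$ is single-scattered, $v \cdot \nabla_x f_{2,1} = -\sigma_a f_{2,1} + \sigma_s \vint{f_1}$ with $f_{2,1}|_{\Gamma_-} = 0$; and where $f_{2,2} := f - f_1 - f_{2,1}$ collects the multiple scattering, $v \cdot \nabla_x f_{2,2} = -\sigma_a f_{2,2} + \sigma_s \vint{f_2}$ with $f_{2,2}|_{\Gamma_-} = 0$ and $f_2 = f_{2,1} + f_{2,2}$. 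Then $M_\psi(f) = M_\psi(f_1) + M_\psi(f_{2,1}) + M_\psi(f_{2,2})$, and the three pieces are treated separately.

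I would take the same concentrating incoming data $\phi = \phi_{\Eps, \delta}$ as in the proof of Theorem~\ref{thm:RTE-general}, peaked at a point $(\Xin, \Vin) \in \Gamma_-$ satisfying~\eqref{cond:non-degen}, but now measure \emph{off axis}: fix a direction $\Vout \neq \pm\Vin$ such that the incoming ray $\{\Xin + \tau \Vin\}$ and the ray $\{\Xout - s \Vout\}$, with $\Xout = z + \tau_+(z, \Vout) \Vout$, meet transversally at a prescribed interior point $z \in \Omega$, and let $\psi$ concentrate at $(\Xout, \Vout)$ at its own scales $\Eps', \delta'$ with a normalizing prefactor as for $\phi$. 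Since $f_1$ is supported, in the velocity variable, in a $\delta$-neighbourhood of $\Vin$, choosing the parameters small enough makes its velocity support disjoint from that of $\psi$; hence $M_\psi(f_1) = 0$ identically, which replaces Step~1 of Theorem~\ref{thm:RTE-general} — the ballistic term is simply switched off by the off-axis geometry. For $M_\psi(f_{2,1})$, letting $\delta \to 0$ and then $\Eps \to 0$ makes $\vint{f_1}$ concentrate to a line measure on the incoming ray with the known attenuation density $e^{-\int_0^t \sigma_a(\Xin + s \Vin) \ds}$; substituting this into the characteristic formula for $f_{2,1}$, changing variables as in Appendix~\ref{sec:geometry}, and then letting $\delta' \to 0$ and $\Eps' \to 0$ as in the bookkeeping of Theorem~\ref{thm:RTE-general}, the single-scattering measurement reduces to $\sigma_s(z)$ times an explicit factor built out of the two attenuation integrals along $\Xin \to z$ and $z \to \Xout$ together with a geometric Jacobian depending only on the angle between $\Vin$ and $\Vout$ — all of which are known. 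Letting $(\Xin, \Vin)$ and $\Vout$ sweep out all such transversal configurations covers every $z \in \Omega$ (using strict convexity), so this piece of the data determines $\sigma_s$ throughout $\bar\Omega$ by continuity.

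The crux is to show that the multiple-scattering measurement $M_\psi(f_{2,2})$ is negligible against $M_\psi(f_{2,1})$ in this limit. After the same change of variables, $M_\psi(f_{2,2})$ is governed by the behaviour of $\vint{f_2}$ near the measurement ray, tested against the concentrating $\psi$. The mere bound $\norm{\vint{f_2}}_{L^1(\Omega)} \leq C \norm{\phi}_{L^1(\Gamma_-)}$, which is all one needs for the absorption problem, does not suffice here: to reconstruct $\sigma_s$ one must additionally localize the scattering point, and this sharper use of the test function no longer leaves $f_{2,2}$ invisible. This is precisely where the averaging lemma enters. Applying Theorem~\ref{thm:averaging} to $f$ and to $f_2$ — the latter with source $\sigma_s \vint{f_1}$, for which $\norm{\vint{f_1}}_{L^p(\Omega)} \leq C \norm{\phi}_{L^p(\Gamma_-)}$ by the explicit ballistic formula together with Theorem~\ref{thm:L-p} — yields $\vint{f_2} = \vint{f} - \vint{f_1} \in W^{\gamma, p}(\Omega)$ for some $\gamma > 0$, with a quantitative bound. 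The extra Sobolev regularity of the multiple-scattering flux, which the single-scattering flux does \emph{not} share (its trace on $\Gamma_+$ being genuinely non-smooth), is exactly the structural asymmetry that lets the concentrating measurement single out $f_{2,1}$ while suppressing $f_{2,2}$. The main technical work, and the step I expect to be hardest, is to convert this regularity dichotomy into the quantitative statement $M_\psi(f_{2,2})/M_\psi(f_{2,1}) \to 0$, balancing the averaging-lemma exponents $\gamma$ and $p$ against the concentration scales and the normalizations of $\phi$ and $\psi$; the rest is a variation of the estimates already carried out for Theorem~\ref{thm:RTE-general}.
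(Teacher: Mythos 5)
Your proposal follows essentially the same route as the paper's proof: the same three-term decomposition (your $f_{2,1}$, $f_{2,2}$ are the paper's $f_2$, $f_3$), the same off-axis geometry with transversal rays meeting at an interior point, the ballistic term killed by disjoint velocity supports, the single-scattering term producing $\sigma_s$ at the intersection point times known attenuation factors, and the averaging lemma suppressing the multiple-scattering remainder. The quantitative balancing you defer as the hardest step is exactly what the paper carries out: it applies the averaging lemma twice (to the equations for $f_1$ and then $f_2$) combined with Sobolev embedding to raise $\vint{f_2}$ into $L^{p_2}$ for a suitable $p_2$, bounds $M_\psi(f_3)$ by $c_0\,\theta^{-1/p_2}\Eps^{-(r-1)/r}\delta^{-(r-1)/r}\eta^{1/r}$ via H\"older, and then chooses $\beta=\delta=\eta^{1+\beta_0}$ with $r\in(1,2)$ and $\theta,\Eps\to 0$ independently of $\eta$ so that this quantity vanishes while the single-scattering limit survives.
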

\begin{proof}
For any given $\phi$, let $f$ be the solution to~\eqref{eq:RTE-linear-full-recall}. Decompose it into three parts: $f = f_1 + f_2 + f_3$, where 
\begin{align} 
   v \cdot \nabla_x f_1 = - \sigma_a f_1 \,,
\qquad
  f_1 \big|_{\Gamma_-} = \phi(x, v) \,, \label{eq:scattering-1}
\end{align}
\begin{align}  \label{eq:scattering-2}
   v \cdot \nabla_x f_2 = - \sigma_a f_2 + \sigma_s \vint{f_1} \,,
\qquad
  f_2 \big|_{\Gamma_-} = 0 \,,
\end{align}
\begin{align} \label{eq:scattering-3}
   v \cdot \nabla_x f_3 
= - \sigma_a f_3 + \sigma_s \vint{f_2} + \sigma_s \vint{f_3}\,,
\qquad
  f_3 \big|_{\Gamma_-} = 0 \,.
\end{align}
Note that given $\sigma_a, \sigma_s$, the first two functions $f_1, f_2$ are explicitly solvable. The idea of the proof is to show $f_3$ is more regular than $f_2$, which in turn more regular than $f_1$, using the averaging lemma. By posing the correct geometry for the incoming and measuring functions, one can show $f_2$ dominates the data, and is used to reconstruct $\sigma_s$.

\medskip

\Ni \underline{\bf Incoming and Measurement} \,
First we need to specify the incoming data $\phi$ and the measurement function $\psi$. Fix $(\Xin, \Vin) \in \Gamma_-$ and $(\Xout, \Vout) \in \Gamma_+$ such that 
\begin{align} \label{cond:V-in-out}
   \Vin \nparallel \Vout \,,
\qquad
  \Vin \cdot \Vout > 0 \,.
\end{align} 
Let $\ell_1$ be the ray initiated at $\Xin$ along the direction $\Vin$ and $\ell_2$ the ray initiated at $\Xout$ along the direction $-\Vout$. Since $\Vin \nparallel \Vout$, the two rays $\ell_1$ and $\ell_2$ have a unique intersection inside $\Omega$, which we denote as $x_0$. For later use, let $s_0 > 0$ be the exit time associated with $x_0$ in the direction of $\Vout$, or more explicitly,
\begin{align} \label{def:x-0}
   x_0 = \Xout - s_0 \Vout = \Xin + s'_0 \Vin \,.
\end{align}
The main goal is to find $\sigma_s(x_0)$. Define $\Vin_\perp$ as the unit vector such that 
\begin{align} \label{def:eta}
   \Vin_\perp \cdot \Vin = 0 \,,
\quad \text{and} \quad 
  \eta  := \Vin_\perp \cdot \Vout > 0 \,.
\end{align}
For the illustration of the geometry, see Figure~\ref{fig:figure1}.
\begin{figure}[h]
\includegraphics[scale = 0.36]{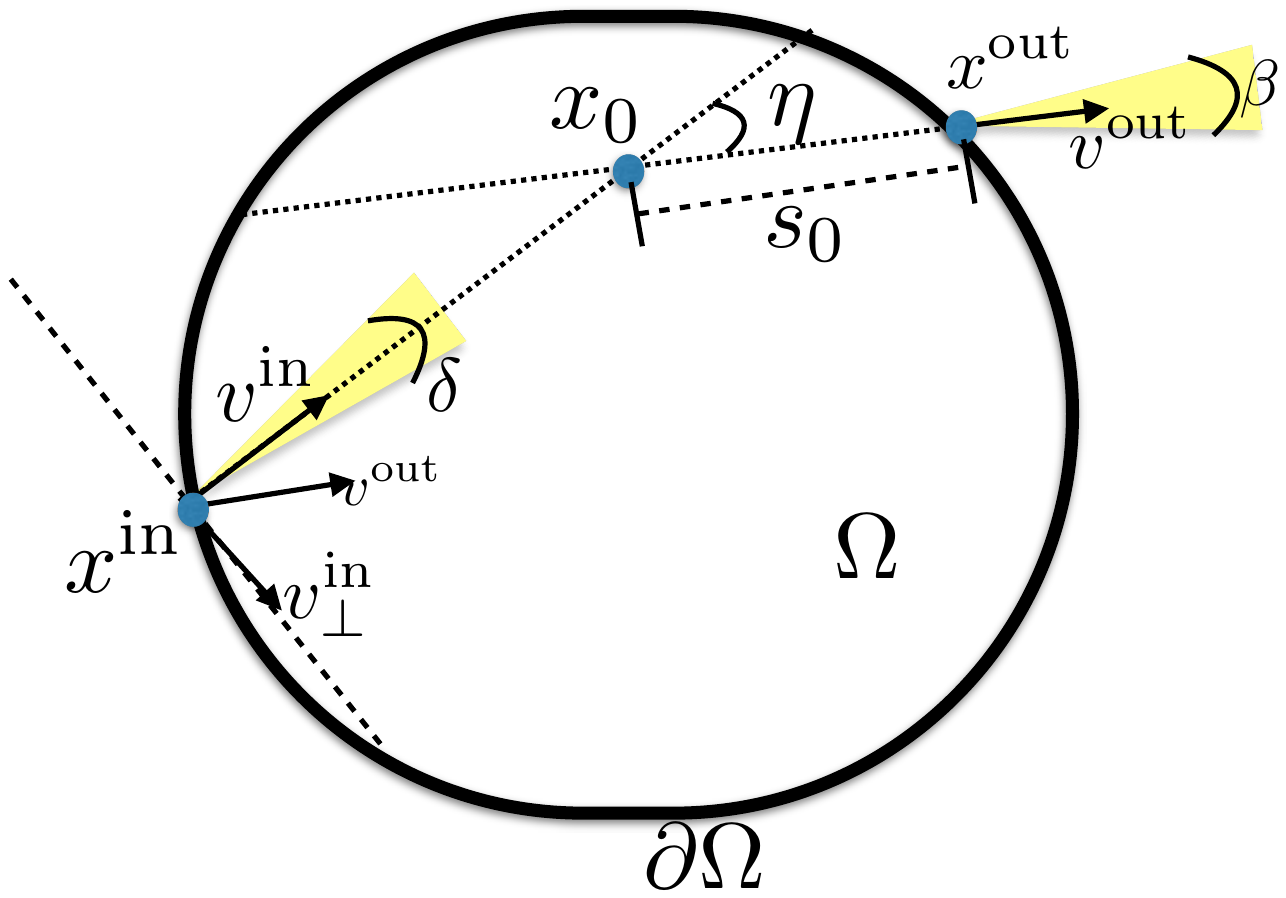}
\caption{Geometry and some physical quantities.}\label{fig:figure1}
\end{figure}

Let $\phi_0$ be a smooth even function on $\R$ such that 
\begin{align*}
  0 \leq \phi_0(r) \leq 1 \,,
\qquad
  \overline{\Supp\phi_0} = [-1, 1] \,,
\qquad 
  \phi_0(0) = 1\,,
\qquad
  \int_\R \phi_0(r) \dr = 1 \,.
\end{align*}
Let $\psi_0$ be the same smooth function defined in~\eqref{def:psi-0} with $\overline{\Supp\psi_0} = [-1, 1]$.
We choose the incoming data $\phi$ and the measurement function $\psi$ as
\begin{align*}
   \phi(x, v) 
&= \frac{1}{\Eps \delta}  
    \phi_0 \vpran{\frac{(x - \Xin) \cdot \Vin_\perp}{\Eps \eta}}
    \phi_0 \vpran{\frac{|v - \Vin|}{\delta}} \,, 
\qquad
   (x, v) \in \Gamma_- \,,
\\
   \psi(x, v)
& = \frac{1}{\theta \beta}
   \psi_0 \vpran{\frac{|x - \Xout|}{\theta}}
   \psi_0 \vpran{\frac{|v - \Vout|}{\beta}} \,,
\hspace{1.2cm}
   (x, v) \in \Gamma_+ \,.
\end{align*}
Quickly, we have
\[
\CalM_\psi(f) = \iint_{\Gamma_-}\psi f\rd\Gamma_-=\CalM_\psi(f_1)+\CalM_\psi(f_2)+\CalM_\psi(f_3)\,.
\]
The essence of the proof is to show that $\CalM_\psi(f_1)$ and $\CalM_\psi(f_3)$ are negligible while $\CalM_\psi(f_2)$ is used to reconstruct $\sigma_s(x_0)$. The estimate for $\CalM_\psi(f_3)$ relies on the averaging lemma, and the estimate for $\CalM_\psi(f_1)$ follows from a basic geometric argument.

As a preparation, we first give an estimate of $L^r$-bound of $\phi$ (with $r$ to be determined later):
\begin{align*}
  \iint_{\Gamma_-} \phi^r (x, v) |v \cdot n| \dS_x \dv
&= \frac{1}{\Eps^r \delta^r}
   \int_{\del\Omega} \int_{\Ss^1_{x, +}}
   \phi_0^r \vpran{\frac{(x - \Xin) \cdot \Vin_\perp}{\Eps \eta}}
   \phi_0^r \vpran{\frac{|v - \Vin|}{\delta}}  |v \cdot n| \dS_x \dv
\\
&\leq
  \vpran{\frac{1}{\delta^r} \int_{\Ss^1} \phi_0^r \vpran{\frac{|v - \Vin|}{\delta}} \dv}
  \vpran{\frac{1}{\Eps^r} \int_{\del\Omega} \phi_0^r \vpran{\frac{(x - \Xin) \cdot \Vin_\perp}{\Eps \eta}} \dS_x} \,,
\end{align*}
where the $v$-integral is bounded as
\begin{align*}
   \frac{1}{\delta^r} \int_{\Ss^1} \phi_0^r \vpran{\frac{|v - \Vin|}{\delta}} \dv
= \frac{1}{\delta^r} \int_0^{2\pi} \phi_0^r \vpran{\frac{|\sin\omega/2|}{\delta/2}} \domega
\leq 
  \frac{1}{\delta^r} \int_0^{2\pi} \phi_0 \vpran{\frac{|\sin\omega/2|}{\delta/2}} \domega
\leq
  c_0 \delta^{-(r-1)} \,.
\end{align*}
In order to estimate the boundary integral, we take $\Vin_\perp$ as the horizontal axis and take $\Eps \eta$ small enough such that $\del \Omega$ is a graph parametrized by
\begin{align*}
   x_2 = f(x_1) \,,
\qquad
   x_1 \in (\Xin_1 - \Eps \eta, \,\, \Xin_1 + \Eps \eta) \,,
\qquad
   x = (x_1, x_2) \,,
\end{align*}
where $f  \in C^1 [\Xin_1 - h_0, \,\, \Xin_1 + h_0]$ for some fixed $h_0$. Then the boundary integral satisfies
\begin{align*}
   \frac{1}{\Eps^r} \int_{\del\Omega} \phi_0^r \vpran{\frac{(x - \Xin) \cdot \Vin_\perp}{\Eps \eta}} \dS_x
&= \frac{1}{\Eps^r} \int_{\Xin_1 - \Eps \eta}^{\Xin_1 + \Eps \eta} \phi_0^r \vpran{\frac{x_1 - \Xin_1}{\Eps \eta}} \sqrt{1 + |f'(x_1)|^2} \dx_1 
\\
& \leq
   \frac{c_0}{\Eps^r} \int_{\Xin_1 - \Eps \eta}^{\Xin_1 + \Eps \eta} \phi_0 \vpran{\frac{x_1 - \Xin_1}{\Eps \eta}} \dx_1
\leq 
   c_0 \Eps^{-(r-1)} \eta \,,
\end{align*}
where $c_0$ depends on the $C^1$-norm of $f$, which is assumed to be bounded since $\del\Omega$ is $C^1$. Note that such bound is independent of $\Xin$ since $\del\Omega$ is compact. Combining the two integrals, we have
\begin{align*} 
  \norm{\phi}_{L^r(\Gamma_-)}
\leq
  \vpran{\iint_{\Gamma_-} \phi^r (x, v) |v \cdot n| \dS_x \dv}^{1/r}
\leq
  c_0 \Eps^{-\frac{r-1}{r}} \delta^{-\frac{r-1}{r}} \eta^{\frac{1}{r}} \,,
\qquad
r > 1 \,. 
\end{align*}

\Ni \underline{\bf Averaging Lemma} \, Now we apply the $L^r$-energy bound and the averaging lemma to obtain a bound for $\vint{f_1}$, $\vint{f_2}$, and $f_3$. First, a direct application of Theorem~\ref{thm:averaging} gives
\begin{align*}
   \norm{\vint{f_1}}_{W^{s_0, r}}
\leq
   c_0 \norm{\phi}_{L^r(\Gamma_-)}
\leq
   c_0 \Eps^{-\frac{r-1}{r}} \delta^{-\frac{r-1}{r}} \eta^{\frac{1}{r}} \,,
\end{align*}
where $s_0 = \inf \{\frac{1}{r}, 1 - \frac{1}{r}\}$. By the Sobolev embedding, we have
\begin{align*}
   \norm{\vint{f_1}}_{L^{p_1}(\Omega)}
\leq
  c_0 \norm{\vint{f_1}}_{W^{s_0, r}}
\leq
  c_0 \, \Eps^{-\frac{r-1}{r}} \delta^{-\frac{r-1}{r}} \eta^{\frac{1}{r}} \,,
\qquad
  \forall \, p_1 \leq \frac{1}{\frac{1}{r} - \frac{s_0}{2}} \,.
\end{align*}
Since $\vint{f_1}$ is the source term in the equation for $f_2$, we apply the averaging lemma again and get
\begin{align} \label{bound:vint-f-2}
   \norm{\vint{f_2}}_{L^{p_2}(\Omega)}
\leq
   c_0 \norm{\vint{f_2}}_{W^{s_1, p_1}(\Omega)}
\leq
  c_0 \norm{\vint{f_1}}_{L^{p_1}(\Omega \times \Ss^1)}
\leq
  c_0 \Eps^{-\frac{r-1}{r}} \delta^{-\frac{r-1}{r}} \eta^{\frac{1}{r}} \,,
\end{align}
where the exponents satisfy that
\begin{align*}
   s_1 = \inf \left\{\frac{1}{p_1}, \,\, 1 - \frac{1}{p_1} \right\} \,,
\qquad
   p_2 \leq \frac{1}{\frac{1}{p_1} - \frac{s_1}{2}} \,.
\end{align*}
By Theorem~\ref{thm:L-p}, we also have
\begin{align} \label{bound:vint-f-3}
   \norm{\vint{f_3}}_{L^{p_2}(\Omega)}
\leq
   \norm{f_3}_{L^{p_2}(\Omega \times \Ss^1)}
\leq
   c_0 \norm{\vint{f_2}}_{L^{p_2}(\Omega)} \,.
\end{align}

\Ni \underline{\bf Contribution from $f_3$} \, Using the change of variables in~\eqref{geometry-2}, we obtain the contribution of $f_3$ to the measurement of the outgoing data as
\begin{align*}
   \abs{ \iint_{\Gamma_+} \psi(x, v) f_3(x, v) \dGamma} \nn
&\leq
  \norm{\sigma_s}_{L^\infty} \int_{\Ss^1} \int_{\Omega} \psi(y + \tau_+(y, v) v, v) \abs{\vint{f_2}(y) + \vint{f_3}(y)} \dy \dv
\\
& \hspace{-2.5cm}
\leq
  c_0 \int_{\Ss^1}\int_{\Omega} \frac{1}{\theta \beta}
   \psi_0 \vpran{\frac{|y + \tau_+(y, v) v - \Xout|}{\theta}}
   \psi_0 \vpran{\frac{|v - \Vout|}{\beta}}
   \abs{\vint{f_2}(y) + \vint{f_3}(y)} \dy \dv 
\\
& \hspace{-2.5cm}
= c_0 \int_\Omega 
   \vpran{\int_{\Ss^1} \frac{1}{\theta \beta}
   \psi_0 \vpran{\frac{|y + \tau_+(y, v) v - \Xout|}{\theta}}
   \psi_0 \vpran{\frac{|v - \Vout|}{\beta}} \dv} \abs{\vint{f_2}(y) + \vint{f_3}(y)} \dy
\\
& \hspace{-2.5cm}
\leq
  c_0  \underbrace{\vpran{\int_\Omega 
   \vpran{\int_{\Ss^1} \frac{1}{\theta \beta}
   \psi_0 \vpran{\frac{|y + \tau_+(y, v) v - \Xout|}{\theta}}
   \psi_0 \vpran{\frac{|v - \Vout|}{\beta}} \dv }^{p'_2}\dy}^{\frac{1}{p'_2}}}_T
   \norm{\vint{f_2}}_{L^{p_2}(\Omega)} \,.
\end{align*}
where$\frac{1}{p'_2} + \frac{1}{p_2} = 1$ and the last step follows from H\"{o}lder inequality and~\eqref{bound:vint-f-3}. The factor $T$ is estimated as follows.
\begin{align*}
  T^{p'_2}
&= \int_\Omega 
   \vpran{\int_{\Ss^1} \frac{1}{\theta \beta}
   \psi_0 \vpran{\frac{|y + \tau_+(y, v) v - \Xout|}{\theta}}
   \psi_0 \vpran{\frac{|v - \Vout|}{\beta}} \dv }^{p'_2}\dy
\\
& \hspace{-0.2cm} 
\leq
  \vpran{\int_{\Ss^1} 
  \vpran{\int_\Omega \frac{1}{\theta^{p'_2}}
  \psi^{p'_2}_0 \vpran{\frac{|y + \tau_+(y, v) v - \Xout|}{\theta}} \dy}
  \frac{1}{\beta} \psi_0 \vpran{\frac{|v - \Vout|}{\beta}} \dv}
  \vpran{\int_{\Ss^1}  \frac{1}{\beta} \psi_0 \vpran{\frac{|v - \Vout|}{\beta}} \dv}^{\frac{p'_2}{p_2}}
\\
& \hspace{-0.2cm} 
\leq
  c_0 \int_{\Ss^1} 
  \underbrace{\vpran{\int_\Omega \frac{1}{\theta^{p'_2}}
  \psi^{p'_2}_0 \vpran{\frac{|y + \tau_+(y, v) v - \Xout|}{\theta}} \dy}}_{T_1}
  \frac{1}{\beta} \psi_0 \vpran{\frac{|v - \Vout|}{\beta}} \dv \,.
\end{align*}
For each $v \in \Ss^1$, if we apply the change of variables 
\begin{align*}
   x = y + \tau_+(y, v) v \in \del\Omega^+_v \,,
\end{align*} 
with $\del\Omega^+_v$ defined in~\eqref{def:Omega-v-plus}, then $T_1$ satisfies
\begin{align*}
  T_1 
= \int_{\del\Omega^+_v} \int_0^{\tau_-(x, v)}
  \frac{1}{\theta^{p'_2}}
  \psi^{p'_2}_0 \vpran{\frac{|x - \Xout|}{\theta}} \ds\dx
\leq
  \vpran{\text{diam}(\Omega)}
  \int_{\del\Omega^+_v} 
  \frac{1}{\theta^{p'_2}}
  \psi^{p'_2}_0 \vpran{\frac{|x - \Xout|}{\theta}} \dx
\leq
  c_0 \theta^{-\vpran{p'_2 - 1}} \,.
\end{align*}
Therefore, $T$ is uniformly bounded in $v$ with the bound
\begin{align*}
  T^{p'_2}
\leq
  c_0 \theta^{-(p'_2 - 1)}
  \int_{\Ss^1} \frac{1}{\beta} \psi_0 \vpran{\frac{|v - \Vout|}{\beta}} \dv
\leq
  c_0 \theta^{-(p'_2 - 1)} \,.
\end{align*}
Inserting the estimate for $T$ back into $\CalM_\psi(f_3)$ and using ~\eqref{bound:vint-f-2}-\eqref{bound:vint-f-3},  we have
\begin{align*}
 \abs{\CalM_\psi(f_3)} =  \abs{ \iint_{\Gamma_+} \psi(x, v) f_3(x, v) \dGamma}
\leq
  c_0 \theta^{-\frac{p'_2 - 1}{p'_2}}
  \Eps^{-\frac{r-1}{r}} \delta^{-\frac{r-1}{r}} \eta^{\frac{1}{r}} 
= c_0 \theta^{-\frac{1}{p_2}}
   \Eps^{-\frac{r-1}{r}} \delta^{-\frac{r-1}{r}} \eta^{\frac{1}{r}} \,.
\end{align*}
We will choose the parameter properly later to make $\CalM_\psi(f_3)$ a negligible term, namely, we will choose parameters so that
\begin{equation}\label{eqn:parameter1}
\theta^{-\frac{1}{p_2}}\Eps^{-\frac{r-1}{r}} \delta^{-\frac{r-1}{r}} \eta^{\frac{1}{r}}\ll 1
\end{equation}

\medskip
\Ni \underline{\bf Contribution from $f_1$} \, We show in this part that by properly choosing the parameters, the contribution from $f_1$ is zero. The formula under consideration is
\begin{align*}
   \CalM_\psi(f_1) = \iint_{\Gamma_+} \psi(x, v) f_1(x, v) \dGamma \,,
\end{align*}
where we solve equation~\eqref{eq:scattering-1} to obtain
\begin{align*}
   f_1(x, v) = e^{-\int_0^{\tau_-(x, v)} \sigma_a(x - sv) \ds}
      \phi(x - \tau_-(x, v) v, v) \,,
\qquad
  (x, v) \in \bar\Omega \times \Ss^1 \,.
\end{align*}
Definitions of $\psi$ and $\phi$ give
\begin{align*}
  \CalM_\psi (f_1)
&= \frac{1}{\Eps \delta} \frac{1}{\theta \beta}
   \iint_{\Gamma_+} e^{-\int_0^{\tau_-(x, v)} \sigma_a(x - sv) \ds}
   \psi_0 \vpran{\frac{|x - \Xout|}{\theta}}
   \psi_0 \vpran{\frac{|v - \Vout|}{\beta}}
\\
& \hspace{2.5cm} \times 
   \phi_0 \vpran{\frac{(x - \tau_-(x, v) v - \Xin) \cdot \Vin_\perp}{\Eps \eta}}
    \phi_0 \vpran{\frac{|v - \Vin|}{\delta}} \dGamma \,.
\end{align*}
The sufficient condition for $\CalM_\psi (f_1)$ to vanish is 
\begin{align} \label{cond:vanish-M-1-supp}
   \Supp \vpran{\psi_0 \vpran{\frac{|v - \Vout|}{\beta}}}
\cap
   \Supp \vpran{\phi_0 \vpran{\frac{|v - \Vin|}{\delta}}} = \emptyset \,.
\end{align}
One sufficient condition for~\eqref{cond:vanish-M-1-supp} to hold is
\begin{align} \label{cond:dist-V-in-out-1}
   \abs{\Vout - \Vin} > \beta + \delta \,,
\end{align}
since then there does not exist any $v$ satisfying that
\begin{align*}
   \abs{v - \Vout} \leq \beta 
\qquad \text{and} \qquad 
   \abs{v - \Vin} \leq \delta \,.
\end{align*}
Recall that $\eta$ is defined in~\eqref{def:eta} as
\begin{align*}
   \eta = \Vout \cdot \Vin_\perp > 0 \,.
\end{align*}
Therefore, by~\eqref{cond:V-in-out}, we have
\begin{align*}
   \Vout \cdot \Vin = \sqrt{1 - \eta^2} \,.
\end{align*}
This gives
\begin{align*}
   \abs{\Vout - \Vin}^2 
= 2 - 2 \Vout \cdot \Vin 
= 2 - 2 \sqrt{1 - \eta^2} \,.
\end{align*}
Hence we have the estimate
\begin{align} \label{diff:Vout-Vin}
   \eta 
\leq \abs{\Vout - \Vin}
\leq  2\eta \,.
\end{align}
It is then clear that a sufficient condition for~\eqref{cond:dist-V-in-out-1} (and thus~\eqref{cond:vanish-M-1-supp}) to hold is
\begin{align} \label{cond:eta-1}
   \eta > \beta + \delta \,.
\end{align}
Such condition gives that $\CalM_\psi(f_1) = 0$.

\medskip
\Ni \underline{\bf Contribution from $f_2$}\,  The main contribution to the measurement comes from $f_2$, which we compute below. Denote such contribution as $\CalM_\psi(f_2)$. Then for any $(x, v) \in \bar\Omega \times \Ss^1$, we have
\begin{align*}
   \CalM_\psi(f_2)
&= \iint_{\Gamma_+}\int_0^{\tau_-(x, v)} \psi(x, v) e^{-\int_0^s \sigma_a(x - \tau v) \dtau}
    \sigma_s(x - sv) \vint{f_1}(x - sv)\ds \dGamma
\\
&= \iint_{\Gamma_+}\int_0^{\tau_-(x, v)} \int_{\Ss^1} \psi(x, v) e^{-\int_0^s \sigma_a(x - \tau v) \dtau}
    \sigma_s(x - sv) f_1(x - sv, w) \dw \ds \dGamma
\\
& = \iint_{\Gamma_+}\int_0^{\tau_-(x, v)} \int_{\Ss^1} \psi(x, v) e^{-\int_0^s \sigma_a(x - \tau v) \dtau}
      e^{-\int_0^{\tau_-(x-sv, w)} \sigma_a(x - sv - \tau w) \dtau}
\\
& \hspace{5cm} \times
    \sigma_s(x - sv) \phi((x-sv)'_w, w) \dw \ds \dGamma \,,
\end{align*}
where $(x - sv)'_w$ is the entry point of $x-sv$ along the direction $w$.
To simplify the notation, we denote
\begin{align*}
   H(s, x, v, w)
= e^{-\int_0^s \sigma_a(x - \tau v) \dtau}
   e^{-\int_0^{\tau_-(x-sv, w)} \sigma_a(x - sv - \tau w) \dtau}
   \sigma_s(x - sv) \,.
\end{align*}
Separate $\CalM_\psi(f_2)$ into two parts as
\begin{align*}
   \CalM_\psi(f_2)
&= \iint_{\Gamma_+}\int_0^{\tau_-(x, v)} \!\! \int_{\Ss^1} \psi(x, v) 
      H(s_0, \Xout, \Vout, \Vin) \phi((x-sv)'_w, w) \dw \ds \dGamma
\\
& \quad \, 
   + \iint_{\Gamma_+}\int_0^{\tau_-(x, v)} \!\! \int_{\Ss^1} \psi(x, v) 
      \vpran{H(s, x, v, w)
      - H(s_0, \Xout, \Vout, \Vin)}
       \phi((x-sv)'_w, w)
        \dw \ds \dGamma
\\
&\Denote \CalM_{2, 1} + \CalM_{2,2} \,.
\end{align*}
To treat the first term $\CalM_{2, 1}$ we insert the definitions of $\phi, \psi$ into $\CalM_{2,1}$ and obtain
\begin{align*}
  \CalM_{2, 1}
&= \frac{H(s_0, \Xout, \Vout, \Vin)}{\Eps \delta} \frac{1}{\theta \beta}
   \iint_{\Gamma_+}\int_0^{\tau_-(x, v)} \!\! \int_{\Ss^1}
      \psi_0 \vpran{\frac{|x - \Xout|}{\theta}}
   \psi_0 \vpran{\frac{|v - \Vout|}{\beta}}
   \phi_0 \vpran{\frac{|w - \Vin|}{\delta}}
\\
& \hspace{7cm} \times
   \phi_0 \vpran{\frac{((x-sv)'_w - \Xin) \cdot \Vin_\perp}{\Eps \eta}}
     \dw \ds \dGamma \,.
\end{align*}

Now we reformulate the second $\phi_0$-term, whose argument satisfies
\begin{align} \label{reform:x-sv-w}
  (x - sv)'_w
&= (x - sv) - \tau_-(x -  sv, w) w  \nn
\\
&= (\Xout - s\Vout) - \tau_-(x -  sv, w) \Vin
     + R(x, v, s, w) \,,
\end{align}
where the remainder term $R$ is
\begin{align*}
   R(x, v, s, w)
= (x - \Xout) - s(v - \Vout) - \tau_-(x -  sv, w) \vpran{w - \Vin}   \,.
\end{align*}
By Corollary~\ref{cor:1}, we have that $\nabla_x \tau_-(\cdot, w)$ is uniformly bounded in $w$ if we choose
\begin{align*}
   \theta + \delta + \beta < \gamma_\ast \,.
\end{align*}
Then by using~\eqref{def:x-0} again, we have
\begin{align*}
   \frac{((x-sv)'_w - \Xin) \cdot \Vin_\perp}{\Eps \eta}
= \frac{\vpran{\Xout - s\Vout - \Xin + R} \cdot \Vin_\perp}{\Eps \eta}
= \frac{s_0 - s + \frac{1}{\eta} R \cdot \Vin_\perp}{\Eps} \,.
\end{align*}
Let $z$ be the new variable given by
\begin{align*}
   z = s - \frac{1}{\eta} R \cdot \Vin_\perp \,.
\end{align*}
Then 
\begin{align*}
   \frac{\del z}{\del s}
= 1 - \frac{1}{\eta} \frac{\del R}{\del s} \cdot \Vin_\perp
= 1 + \frac{1}{\eta} (v - \Vout) \cdot \Vin_\perp
   - \frac{1}{\eta} \big(v \cdot \nabla_x \tau_-(x - sv, w) \big) (w - \Vin) \cdot \Vin_\perp \,.
\end{align*}
Due to the compact supports of $\phi_0$ and $\psi_0$, the variables $(x, v, w)$ in $R$ satisfy that
\begin{align*}
   \abs{x - \Xout} \leq \theta \,,
\qquad
   \abs{v - \Vout} \leq \beta \,,
\qquad
   \abs{w- \Vin} \leq \delta \,.
\end{align*}
If we impose that
\begin{align} \label{cond:eta-2}
   \eta \gg \beta + \delta \,,
\end{align}
then $\del z/\del s > 1/2$ and we can make the change of variable from $s$ to $z$. Denote $I = z^{-1}(0, \tau_-(x, v))$. Then
\begin{align*}
 \lim_{\Eps, \theta \to 0} \lim_{\stackrel{\eta \to 0}{\eta \gg \beta + \delta}} \CalM_{2,1}
&=  \lim_{\Eps, \theta \to 0} \lim_{\stackrel{\eta \to 0}{\eta \gg \beta + \delta}} \frac{H(s_0, \Xout, \Vout, \Vin)}{\Eps \delta} \frac{1}{\theta \beta}
   \iint_{\Gamma_+}\int_{I} \!\! \int_{\Ss^1}
      \psi_0 \vpran{\frac{|x - \Xout|}{\theta}}
   \psi_0 \vpran{\frac{|v - \Vout|}{\beta}}
\\
& \hspace{6cm} \times
   \phi_0 \vpran{\frac{s_0 - z}{\Eps}} 
   \phi_0 \vpran{\frac{|w - \Vin|}{\delta}} \frac{\del z}{\del s}
     \dw \ds \dGamma  \,.
\end{align*}
Since $s_0$ is an interior point by Corollary~\ref{cor:2}, we have
\begin{align*}
   \lim_{\Eps, \theta \to 0} \lim_{\stackrel{\eta \to 0}{\eta \gg \beta + \delta}} \CalM_{2,1}
= H(s_0, \Xout_0, \Vin, \Vin)  \,.
\end{align*}
where $\Xout, \Vout$ are replaced by $\Xout_0, \Vin$ in the limit $\eta \to 0$. Meanwhile, by the continuity of $\tau_-$ and $\sigma_a$, the second term $\CalM_{2,2}$ will vanish in the limit.

Consider that under conditions~\eqref{eqn:parameter1} and~\eqref{cond:eta-1}, assuming $\delta^{-\frac{r-1}{r}} \eta^{\frac{1}{r}}\to 0$, then $\CalM_\psi(f_1) = 0$ and $\CalM_\psi(f_3)\to0$, overall we have
\begin{align*}
  \sigma_s(x_0)
&= e^{\int_0^{s_0} \sigma_a(\Xout - \tau \Vout) \dtau}
   e^{\int_0^{\tau_-(x_0, \Vin)} \sigma_a(x_0 - \tau \Vin) \dtau}  
   \lim_{\Eps, \theta \to 0} \lim_{\stackrel{\eta \to 0}{\eta \gg \beta + \delta}} \CalM_\psi(f)
\\
&= e^{\int_0^{s_0} \sigma_a(\Xout_0 - \tau \Vin) \dtau}
   e^{\int_0^{\tau_-(x_0, \Vin)} \sigma_a(x_0 - \tau \Vin) \dtau}
      \lim_{\Eps, \theta \to 0} \lim_{\stackrel{\eta \to 0}{\eta \gg \beta + \delta}} \CalM_\psi(f) \,.
\end{align*}

\medskip
\Ni \underline{\bf Choice of the parameters}\, We now collect all requirements on the parameters, namely equation~\eqref{eqn:parameter1}, \eqref{cond:eta-1} and~\eqref{cond:eta-2}. Choose $\theta \to 0$ and $\Eps \to 0$ independent of $\eta$, these requirements reduce to:
\begin{align} \label{cond:param-1}
   \delta^{-\frac{r-1}{r}} \eta^{\frac{1}{r}}
\ll 1 \,,
\qquad
   \beta + \delta \ll \eta \,.
\end{align}
In the borderline case where $\delta = \eta$, the sufficient condition for the first inequality in ~\eqref{cond:param-1} to hold is 
\begin{align*}
  \frac{r-1}{r} < \frac{1}{r} 
\Rightarrow
  r < 2 \,.
\end{align*}
This suggests that we can find proper parameters by letting $\theta \to 0$ and $\Eps \to 0$ independent of $\eta$ and setting
\begin{align*}
   \beta = \delta = \eta^{1 + \beta_0} \,.
\end{align*}
with $\beta_0$ small enough, then~\eqref{cond:param-1} holds for $r \in (1, 2)$.
\end{proof}

\appendix

\section{Well-posedness of the Nonlinear RTE} \label{sec:appendix-1}
In this appendix we use the classical monotonicity method combined with the Schauder fixed-point argument to show that the nonlinear RTE given in~\eqref{eq:RTE-nonlinear-3}-\eqref{eq:RTE-nonlinear-4} is well-posed. Recall that the equations are given by
\begin{align}
   v \cdot \nabla_x I &= -\sigma_a I + \sigma_a T^4 \,, 
\qquad
   I \big|_{\Gamma_-} = \phi(x, v) \,,
   \label{eq:RTE-nonlinear-5}
\\
   \Delta_x T &= \sigma_a T^4 - \sigma_a \vint{I},
\qquad
   T \big|_{\del\Omega} = 0 \,.
   \label{eq:RTE-nonlinear-6}
\end{align} 
where $\phi \geq 0$ and $\phi \in L^\infty(\Gamma_-)$. The statement of the well-posedness result is
\begin{thm} \label{thm:well-posed-nonlinear-RTE}
Suppose $\phi \in L^\infty(\Gamma_-)$ and $\phi \geq 0$. Then~\eqref{eq:RTE-nonlinear-5}-\eqref{eq:RTE-nonlinear-6} has a unique solution. 
\end{thm}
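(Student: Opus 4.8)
The plan is to decouple \eqref{eq:RTE-nonlinear-5}--\eqref{eq:RTE-nonlinear-6} into a linear transport equation and a semilinear elliptic equation and to solve the coupled system by a Schauder fixed-point argument carried out on the temperature $T$. On the closed convex subset $K = \{T \in C(\bar\Omega) : 0 \le T \le M\}$ of $C(\bar\Omega)$, with $M$ to be fixed below, I would define a map $\Phi$ as follows. Given $T \in K$, first solve the \emph{linear} transport equation $v \cdot \nabla_x I = -\sigma_a I + \sigma_a T^4$ with incoming data $\phi$; its solution $I = I[T]$ is given explicitly along characteristics, is nonnegative because $\phi \ge 0$ and $\sigma_a T^4 \ge 0$, and the Duhamel formula yields the bound $\norm{I[T]}_{L^\infty} \le \norm{\phi}_{L^\infty(\Gamma_-)} + \norm{T}_{L^\infty}^4$, with the $T^4$-contribution in fact damped by a factor $1 - e^{-\int \sigma_a} < 1$. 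Then, with $h := \sigma_a\vint{I[T]} \ge 0$ in hand, solve the semilinear Dirichlet problem $\Delta_x \tilde T = \sigma_a \tilde T^4 - h$ in $\Omega$, $\tilde T|_{\del\Omega} = 0$; this is where the monotonicity method enters, since the operator $u \mapsto -\Delta_x u + \sigma_a (u_+)^4$ is monotone, coercive and hemicontinuous on $H_0^1(\Omega)$, so by the Browder--Minty theorem (equivalently, by the method of sub- and super-solutions, with $0$ a subsolution and a suitable constant a supersolution) there is a unique weak solution $\tilde T$; the weak maximum principle gives $\tilde T \ge 0$, hence $(\tilde T_+)^4 = \tilde T^4$ and $\tilde T$ solves the original equation, and elliptic regularity on the convex domain $\Omega$ upgrades $\tilde T$ to $H^2(\Omega) \cap C(\bar\Omega)$. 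Set $\Phi(T) = \tilde T$.

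It then remains to verify the hypotheses of Schauder's theorem. Compactness of $\Phi$ follows from the regularity gain together with the compact embedding $H^2(\Omega) \hookrightarrow C(\bar\Omega)$ in two dimensions, once the $H^2$-bound is shown to be uniform over $K$; continuity of $\Phi$ follows from the stability of the characteristic formula for $I[T]$ under $L^\infty$-perturbations of $T$ and from the continuous dependence of the unique solution of the monotone elliptic problem on its right-hand side. To obtain the invariant set I would combine two a priori bounds: the maximum principle for the elliptic subproblem gives $\norm{\tilde T}_{L^\infty}^4 \le \sigma_0^{-1}\norm{\sigma_a}_{L^\infty}\norm{\vint{I[T]}}_{L^\infty}$, while the energy identity already derived in the proof of Theorem~\ref{thm:nonlinear-RTE} --- multiply the transport equation by $I$, the elliptic equation by $T^4$, and subtract --- controls $\norm{T^4}_{L^2(\Omega)}$ and $\norm{I}_{L^2(\Omega\times\Ss^1)}$ by $\norm{\phi}_{L^2(\Gamma_-)}$, which through elliptic regularity gives an $L^\infty$-bound on any fixed point of $\Phi$ depending only on $\norm{\phi}_{L^\infty(\Gamma_-)}$, $\sigma_0$, $\norm{\sigma_a}_{L^\infty}$ and $\Omega$; this fixes $M$. (If closing $\Phi(K)\subseteq K$ directly proves awkward, one may instead invoke the Leray--Schauder/Schaefer form of the fixed-point theorem, using the same energy estimate as the required a priori bound on solutions of $T = \lambda\Phi(T)$, $\lambda \in [0,1]$.) The fixed point $T$ of $\Phi$, together with $I = I[T]$, then solves \eqref{eq:RTE-nonlinear-5}--\eqref{eq:RTE-nonlinear-6} with $I, T \ge 0$.

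For uniqueness I would subtract two solutions $(I_1, T_1)$, $(I_2, T_2)$ with the same data, set $i = I_1 - I_2$, $t = T_1 - T_2$, $\rho = T_1^4 - T_2^4$, and run energy estimates mirroring the a priori bound of Theorem~\ref{thm:nonlinear-RTE}: multiplying the transport equation for $i$ by $i$ and using $\sigma_a \ge \sigma_0 > 0$ controls $\norm{i}_{L^2(\Omega\times\Ss^1)}$, hence $\norm{\vint{i}}_{L^2(\Omega)}$, by $\norm{\sigma_a \rho}_{L^2(\Omega)}$ up to the factor $\sigma_0^{-1}$, while multiplying the elliptic equation for $t$ by $t$ and using the monotonicity $(T_1^4 - T_2^4)(T_1 - T_2) \ge 0$ yields $\norm{\nabla t}_{L^2}^2 \le \int_\Omega \sigma_a \vint{i}\, t$; together with the pointwise bound $|\rho| \le C|t|$ from the a priori $L^\infty$-bound and Poincar\'e's inequality, this is designed to force $t \equiv 0$ and then $i \equiv 0$. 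I expect the coupling between the transport and the elliptic equations to be the main obstacle: unlike the linear examples, the $T^4$ source term could a priori drive $I$, and through the elliptic equation $T$, to blow up, so neither the invariant set for Schauder nor the closing of the uniqueness estimate is obtained by soft arguments --- both hinge on feeding the coercivity of the absorption term and the $L^2$ energy identity of Theorem~\ref{thm:nonlinear-RTE} (and, if the straightforward estimate only yields uniqueness for small data, the velocity-averaging smoothing of $\vint{i}$) into the argument, just as the monotonicity of $s\mapsto s^4$ is exactly what makes the elliptic subproblem uniquely solvable.
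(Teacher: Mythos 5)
Your proposal is correct and follows essentially the same route as the paper: decouple the system, solve the transport equation along characteristics, solve the semilinear elliptic subproblem by a monotonicity/sub--super-solution argument (the paper runs an explicit monotone iteration with the shifted operator $-\Delta + \lambda$ and $f(u)=\lambda u - \sigma_a u^4$), and close with a Schauder fixed point on the temperature. The only notable differences are that the paper obtains the invariant set more directly --- a comparison argument gives $0 \le I_H \le \|\phi\|_{L^\infty}$ whenever $0 \le H^4 \le \|\phi\|_{L^\infty}$, and then the elliptic maximum principle yields $0 \le T \le \|\phi\|_{L^\infty}^{1/4}$, with no need for your damping factor $1 - e^{-\int \sigma_a}$ --- and that the uniqueness step you flag as potentially requiring smallness is dispatched in the paper by the same one-line appeal to energy estimates that you sketch, so that concern is not resolved there either.
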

\begin{proof}
Let $\CalD$ be the solution set given by
\begin{align*}
   \CalD = \{T \big| \,\, 0 \leq T \leq \norm{\phi}_{L^\infty} \,\} \,.
\end{align*}
Take $H \in \CalD$. We want to construct a map $\CalF$ and show that $\CalF(H) \in \CalD$. Let $I_H$ be the solution such that
\begin{align*}
     v \cdot \nabla_x I_H &= -\sigma_a I_H + \sigma_a H^4 \,, 
\qquad
   I_H \big|_{\Gamma_-} = \phi(x, v) \,.
\end{align*}
Such $I_H$ exists by a direct integration along the characteristics. Since $H^4 \geq 0$ and $\phi \geq 0$, we have $I_H \geq 0$. Moreover, if we consider $I_H - \norm{\phi}_{L^\infty}$, then it satisfies
\begin{align*}
    v \cdot \nabla_x \vpran{I_H - \norm{\phi}_{L^\infty}}
&= -\sigma_a \vpran{I_H - \norm{\phi}_{L^\infty}} 
     + \sigma_a \vpran{H^4 - \norm{\phi}_{L^\infty}}\,, 
\qquad
   \vpran{I_H - \norm{\phi}_{L^\infty}} \big|_{\Gamma_-} \leq 0 \,.
\end{align*}
Since $H^4 - \norm{\phi}_{L^\infty} \leq 0$, we have $I_H \leq \norm{\phi}_{L^\infty}$. Define $\CalF(H) = T$ where $T$ is the solution to the equation
\begin{align*}
   \Delta_x T &= \sigma_a T^4 - \sigma_a \vint{I_H},
\qquad
   T \big|_{\del\Omega} = 0 \,.
\end{align*}
or equivalently,
\begin{align}
  - \Delta_x T &= -\sigma_a T^4 + \sigma_a \vint{I_H},
\qquad
   T \big|_{\del\Omega} = 0 \,.
   \label{eq:RTE-nonlinear-7}
\end{align}
We use the classical monotonicity method for semilinear elliptic equations to show that such $T$ exists and is unique. First, let $\underline{T} = 0$ and $\bar T$ be the unique solution to the equation
\begin{align*}
     -\Delta_x \bar T &=  \sigma_a \vint{I_H} \,,
\qquad
   \bar T |_{\del\Omega} = 0 \,.
\end{align*}
Since it holds that
\begin{align*}
   -\Delta_x \underline{T} - \sigma_a \vint{I_H}
\leq 0 =  -\sigma_a \underline{T}^4 ,
\qquad
   \underline{T} \big|_{\del\Omega} = 0 \,,
\end{align*}
and
\begin{align*}
   - \Delta_x \bar{T} - \sigma_a \vint{I_H}
= 0
\geq -\sigma_a \bar {T}^4 ,
\qquad
   \bar T \big|_{\del\Omega} = 0 \,,
\end{align*}
the functions $\underline{T}$ and $\bar T$ serve as the sub- and super-solutions of~\eqref{eq:RTE-nonlinear-7}. Moreover, we have
\begin{align*}
   0 \leq \underline{T} \leq \bar T \,.
\end{align*}
We use an inductive argument to build an increasing sequence as follows. Fix a constant $\lambda$ which satisfies 
\begin{align*}
   \lambda > 4 \norm{\sigma_a}_{L^\infty} \norm{\phi}_{L^\infty}^{3/4} \,.
\end{align*} 
This guarantees that the function $f(x) = \lambda x - \sigma_a x^4$ is increasing for any $x \in (0, \,  \norm{\phi}_{L^\infty})$. Initialize the sequence at $T_0 = \underline{T}$ and suppose at the inductive step that 
\begin{align*}
   0 \leq T_k \leq \norm{\phi}_{L^\infty}^{1/4} \,.
\end{align*} 
Define $T_{k+1}$ as the unique solution to the equation
\begin{align}
  - \Delta_x T_{k+1} + \lambda T_{k+1}
&= \lambda T_k-\sigma_a T_k^4 
     + \sigma_a \vint{I_H},
\qquad
   T_{k+1} \big|_{\del\Omega} = 0 \,.
   \label{eq:RTE-nonlinear-8}
\end{align}
Note that $T_{k+1} \geq 0$ since by the choice of $\lambda$ and the assumption of $T_k$ the right-hand side satisfies
\begin{align*}
  \lambda T_k-\sigma_a T_k^4 
     + \sigma_a \vint{I_H}
\geq 
   \sigma_a \vint{I_H}
\geq 0 \,.
\end{align*}
Moreover, $T_{k+1} \leq \norm{\phi}_{L^\infty}^{1/4}$ since we have
\begin{align*}
     - \Delta_x T_{k+1} + \lambda T_{k+1}
\leq \lambda T_k \,,
\end{align*}
which implies that
\begin{align*}
   \max T_{k+1} \leq \max T_k \leq \norm{\phi}_{L^\infty}^{1/4} \,.
\end{align*}
Now we show that $T_{k+1} \geq T_k$ for all $k \geq 0$. First, $T_1 \geq T_0 = 0$ since we have shown that $T_k \geq 0$ for all $k$.
Next, the difference $T_{k+1} - T_k$ satisfies the equation
\begin{align*}
  - \Delta_x \vpran{T_{k+1} - T_k} 
  + \lambda \vpran{T_{k+1} - T_k}
&= f(T_k) - f(T_{k-1}) \geq 0,
\qquad
   (T_{k+1} - T_k) \big|_{\del\Omega} = 0 \,.  
\end{align*}
where recall that $f(x) = \lambda x - x^4$. Hence
\begin{align*}
   \min_{\bar\Omega} (T_{k+1} - T_k) 
= \min_{\del\Omega} (T_{k+1} - T_k) 
= 0 \,,
\end{align*}
which implies that $T_{k+1} \geq T_k$. We thereby have constructed an increasing sequence. Lastly we want to show that $T_k \leq \bar T$ for all $k \geq 0$. This is done by considering the equation for $T_k - \bar T$ which reads
\begin{align*}
  - \Delta_x \vpran{T_k - \bar T} 
  + \lambda \vpran{T_k - \bar T}
&= f(T_{k-1}) -  \lambda \bar T,
\qquad
   (T_k - \bar T) \big|_{\del\Omega} = 0 \,.  
\end{align*}
By the induction assumption at $k$ such that $T_{k-1} \leq \bar T$, the right-hand side of the equation satisfies
\begin{align*}
   f(T_{k-1}) - \lambda \bar T
\leq
  f(T_{k-1}) - \vpran{\lambda \bar T - \sigma_a \bar T^4}
\leq 0 \,.
\end{align*}
Hence by the maximum principle, we have
\begin{align*}
   \max_{\bar\Omega} (T_{k+1} - \bar T) 
= \max_{\del\Omega} (T_{k+1} - \bar T) 
= 0 \,,
\end{align*}
which gives that $T_{k+1} \leq \bar T$. Overall, we have
\begin{align*}
   0 = \underline{T} = T_0 
\leq T_1 
\leq \cdots \leq T_k \leq \cdots \leq \bar T \,.
\end{align*}
Together with the $L^\infty$ bound of $T_k$, we have that there exists $T \in L^\infty(\Omega)$ such that
\begin{align*}
   T_k \to T 
\quad
\text{pointwise and in $L^4$} \,.
\end{align*}
Passing $k \to \infty$ in~\eqref{eq:RTE-nonlinear-8} shows $T$ is a weak solution of~\eqref{eq:RTE-nonlinear-7} and $\norm{T}_{L^\infty} \leq \norm{\phi}_{L^\infty}^{1/4}$. The $L^\infty$-bounds of $T$ and $I_H$ shows that $T \in W^{2, \infty}(\Omega)$. Hence the mapping $\CalF$ is compact and we can then apply the Schauder fixed-point theorem to obtain a strong solution to~\eqref{eq:RTE-nonlinear-5}-\eqref{eq:RTE-nonlinear-6}. The uniqueness can be shown by directly taking the difference of two potential solutions and using the energy estimate. 
\end{proof}

\section{Geometry}\label{sec:geometry}
In this appendix, we show the proofs for two geometric relations~\eqref{geometry-1} and~\eqref{geometry-2}. First we prove~\eqref{geometry-1}. 

\begin{proof}[Proof of~\eqref{geometry-1}]
Suppose that in a small neighborhood of $x \in \del\Omega$, the boundary $\del\Omega$ is parametrized as 
\begin{align*}
   x = x(u) \,,
\qquad
   u \in (u_0, u_1) \,.
\end{align*}
Then the corresponding small neighborhood of $y$, given that $y$ is the exit point of $x$, is also parametrized by $u$ through the relation
\begin{align*}
    y = y(u) = x(u) - \tau_-(x(u), \Vin) \Vin \,,
\qquad
   u \in (u_0, u_1) \,.
\end{align*}
Therefore, $\frac{\dx}{\du}$ and $\frac{\dy}{\du}$ are both along the tangential direction. Moreover, 
\begin{align*}
   \dS_x = \abs{\frac{\dx}{\du}} \du \,,
\qquad
  \dS_y = \abs{\frac{\dy}{\du}} \du \,.
\end{align*}
which gives
\begin{align*}
  \frac{\dS_x}{\dS_y} = \frac{\abs{\dx/\du}}{\abs{dy/\du}} \,.
\end{align*}
Note that for any unit vectors $\alpha, \beta$, we have
\begin{align} \label{perp}
   \abs{\alpha \cdot \beta^\perp} = \abs{\alpha^\perp \cdot \beta} \,. 
\end{align}
Therefore, if we denote $T_x$ and $T_y$ as the unit tangential directions at $x$ and $y$ respectively, then 
\begin{align*}
   \abs{n(x) \cdot \Vin}
= \abs{T_x^\perp \cdot \Vin}
= \abs{T_x \cdot \vpran{\Vin}^\perp}
= \abs{\frac{\dx}{\du} \cdot \vpran{\Vin}^\perp}
   \frac{1}{\abs{\frac{\dx}{\du}}}
\end{align*}
Similarly, 
\begin{align*}
   \abs{n(y) \cdot \Vin}
= \abs{\frac{\dy}{\du} \cdot \vpran{\Vin}^\perp}
   \frac{1}{\abs{\frac{\dy}{\du}}} \,.
\end{align*}
Therefore, 
\begin{align*}
    \frac{\abs{n(y) \cdot \Vin}}{\abs{n(x) \cdot \Vin}}
= \frac{\abs{\dx/\du}}{\abs{dy/\du}}
   \frac{\abs{\frac{\dy}{\du} \cdot \vpran{\Vin}^\perp}}{\abs{\frac{\dx}{\du} \cdot \vpran{\Vin}^\perp}} \,.
\end{align*}
Observe that by the definition of $y$, we have
\begin{align*}
   \frac{\dy}{\du}
= \frac{\dx}{\du} - \frac{\dtau_-(x(u), \Vin)}{\du} \Vin \,.
\end{align*}
Hence,
\begin{align*}
   \frac{\dy}{\du} \cdot \vpran{\Vin}^\perp
= \vpran{\frac{\dx}{\du} - \frac{\dtau_-(x(u), \Vin)}{\du} \Vin} \cdot \vpran{\Vin}^\perp
= \frac{\dx}{\du} \cdot \vpran{\Vin}^\perp \,.
\end{align*}
Therefore, 
\begin{align*}
   \frac{\abs{n(y) \cdot \Vin}}{\abs{n(x) \cdot \Vin}}
= \frac{\abs{\dx/\du}}{\abs{dy/\du}}
= \frac{\dS_x}{\dS_y} \,,
\end{align*}
which is equivalent to~\eqref{geometry-1}. 
\end{proof}

Next we verify~\eqref{geometry-2}.
\begin{proof}[Proof of~\eqref{geometry-2}]
Fix $x \in \del\Omega_v^+$. Suppose the neighborhood of $x$ (in $\del\Omega$) is a curve parametrized as 
\begin{align*}
   x = x(u) \,,
\qquad
  u \in (u_0, u_1) \,.
\end{align*}
Then 
\begin{align*}
   y(u, s) = x(u) - s v \,.
\end{align*}
The Jacobian of the mapping $y \to (u, s)$ is
\begin{align*}
   \abs{\det 
   \begin{pmatrix}
      \frac{\dy_1}{\du}  & \frac{\dy_2}{\du}  \\[2pt]
      \frac{\dy_1}{\ds}  & \frac{\dy_2}{\ds}
   \end{pmatrix}}
= \abs{\det 
   \begin{pmatrix}
      \frac{\dx_1}{\du}  & \frac{\dx_2}{\du}  \\[2pt]
      -v_1  & -v_2
   \end{pmatrix}}
= \abs{-v_2 \frac{\dx_1}{\du} + v_1 \frac{\dx_2}{\du}}
= \abs{v^\perp \cdot \nabla_u x}
= \abs{v^\perp \cdot T_x} \abs{\nabla_u x}  \,.
\end{align*}
where $T_x$ is the tangent direction at $x$. By~\eqref{perp}, we have
\begin{align*}
   \abs{\det 
   \begin{pmatrix}
      \frac{\dy_1}{\du}  & \frac{\dy_2}{\du}  \\[2pt]
      \frac{\dy_1}{\ds}  & \frac{\dy_2}{\ds}
   \end{pmatrix}}
= \abs{v \cdot n(x)} \abs{\nabla_u x} \,.
\end{align*}
Therefore~\eqref{geometry-2} holds since
\begin{align*}
   \dy 
= \abs{\frac{\del(y_1, y_2)}{\del (u, s)}} \du\ds
= \abs{v \cdot n(x)} \abs{\nabla_u x} \du\ds
= n(x) \cdot v \dS_x \ds \,,
\end{align*}
where we can remove the absolute value sign since $n(x) \cdot v > 0$.
\end{proof}

\section{Some technical lemmas}
This appendix is devoted to showing several technical results used in the proof of Theorem~\ref{thm:recover-scattering}. The notations $\Xin, \Xout_0, \Vin, s_0, x_0, \Vout$ represent the same quantities as in the theorem.

\begin{lem} \label{lem:C-1-tau-minus}
There exists $\gamma_0$ small enough such that $\tau_-(x-sv, w)$ is $C^1$ in $(x, v, s, w)$ over the domain
\begin{align} \label{cond:neighbourhood}
   \abs{x - \Xout_0} + \abs{w - \Vin} + \abs{v - \Vin} < \gamma_0 \,,
\qquad
   s \in (0, \tau_-(x, v)) \,,
\qquad
  (x, v) \in \Gamma_+ \,.
\end{align}
Moreover, the bound $\norm{\nabla_x \tau_-(\cdot, w)}_{L^\infty}$ is independent of $w$ over the region~\eqref{cond:neighbourhood}.
\end{lem}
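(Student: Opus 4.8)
The plan is to write $\tau_-(x-sv,w)$ as the composition of the affine map $(x,v,s)\mapsto y:=x-sv$ (which is $C^\infty$ with $\nabla_x y=\Id$) with the backward exit time $\tau_-(\cdot,\cdot)$, and then invoke Lemma~\ref{lem:tau-continuity}(b). That lemma gives differentiability of $\tau_-$ at a point $(y,w)$, together with the explicit formula $\nabla_y\tau_-(y,w)=n(y_-)/(w\cdot n(y_-))$, as soon as the backward exit point $y_-:=y-\tau_-(y,w)w$ satisfies the non-degeneracy $w\cdot n(y_-)\neq 0$. On an open set where this holds throughout, $y_-$ is continuous in $(y,w)$ and $n$ is continuous (since $\del\Omega\in C^1$), so the derivative formula is continuous there and hence $\tau_-$ is $C^1$ on that set. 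Thus the whole lemma reduces to producing $\gamma_0>0$ for which $|w\cdot n(y_-)|$ is bounded below over the region~\eqref{cond:neighbourhood}, with $y_-=(x-sv)-\tau_-(x-sv,w)\,w$; the chain rule then gives $\nabla_x[\tau_-(x-sv,w)]=n(y_-)/(w\cdot n(y_-))$, whose norm equals $1/|w\cdot n(y_-)|$ and is therefore bounded uniformly in $w$.

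To pin down the lower bound I would first examine the base configuration $x=\Xout_0$, $v=\Vin$, $w=\Vin$. Here $\Xout_0$ is the forward exit point of the ray from $(\Xin,\Vin)$, so the chord of $\Omega$ through $\Xout_0$ in direction $\Vin$ has its other endpoint at $\Xin$; writing $L:=\tau_-(\Xout_0,\Vin)$ for its length, for every $s\in[0,L]$ the point $y=\Xout_0-s\Vin$ lies on this chord, and since the backward exit time decreases linearly along a ray, $\tau_-(y,\Vin)=L-s$ and $y_-=\Xout_0-L\Vin=\Xin$, \emph{independently of $s$}. Because the intersection point $x_0$ lies in the interior of $\Omega$, the incoming ray genuinely enters, so $\Vin\cdot n(\Xin)<0$; set $c_{\mathrm{in}}:=-\Vin\cdot n(\Xin)>0$. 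At the base configuration we thus have $|w\cdot n(y_-)|=c_{\mathrm{in}}$, uniformly in $s\in[0,L]$.

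Next I would propagate this bound to a neighborhood by a compactness argument. If no $\gamma_0$ worked, there would be a sequence $(x_n,v_n,s_n,w_n)$ with $|x_n-\Xout_0|+|v_n-\Vin|+|w_n-\Vin|\to 0$, $s_n\in(0,\tau_-(x_n,v_n))$, and $|w_n\cdot n(y_{-,n})|<c_{\mathrm{in}}/2$. Since $s_n\leq\text{diam}(\Omega)$, along a subsequence $s_n\to s_\ast\in[0,L]$, hence $y_n:=x_n-s_nv_n\to y_\ast:=\Xout_0-s_\ast\Vin$. In each case — $y_\ast$ interior, or $s_\ast=0$ so $y_\ast=\Xout_0$ with $\Vin$ pointing outward there, or $s_\ast=L$ so $y_\ast=\Xin$ with $\Vin$ pointing inward there — the backward exit of $(y_\ast,\Vin)$ is the point $\Xin$, at which $\Vin\cdot n(\Xin)=-c_{\mathrm{in}}\neq 0$; so Lemma~\ref{lem:tau-continuity}(b) applies at $(y_\ast,\Vin)$ and gives continuity of the exit-point map there, whence $y_{-,n}\to\Xin$ and $w_n\cdot n(y_{-,n})\to\Vin\cdot n(\Xin)=-c_{\mathrm{in}}$, contradicting $|w_n\cdot n(y_{-,n})|<c_{\mathrm{in}}/2$. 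This produces $\gamma_0$ with $|w\cdot n(y_-)|\geq c_{\mathrm{in}}/2$ on~\eqref{cond:neighbourhood}, so $\tau_-(x-sv,w)\in C^1$ there and $\|\nabla_x\tau_-(\cdot,w)\|_{L^\infty}\leq 2/c_{\mathrm{in}}$ independently of $w$.

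The main obstacle is the endpoint cases of the compactness step, where the limit point $y_\ast$ sits on $\del\Omega$: one must check that Lemma~\ref{lem:tau-continuity}(b) still delivers continuity of $y_-$ at $(y_\ast,\Vin)$ — which it does, since the relevant non-degeneracy is that of $\Vin\cdot n(\Xin)$ at the exit point, not the position of $y_\ast$ relative to $\del\Omega$ — and that the constraint $s_n<\tau_-(x_n,v_n)$ is compatible with $s_n\to L$, which follows from continuity of $\tau_-$ at $(\Xout_0,\Vin)$. Everything else is routine bookkeeping of continuity and the chain rule.
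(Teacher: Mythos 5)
Your argument is correct, and it shares the paper's overall skeleton: both proofs reduce the lemma to producing a uniform lower bound on $\abs{w \cdot n\vpran{(x-sv)_-}}$ over the region~\eqref{cond:neighbourhood} and then read off the $C^1$ regularity and the $w$-independent bound $\norm{\nabla_x \tau_-(\cdot,w)}_{L^\infty} \leq 1/\abs{w\cdot n((x-sv)_-)}$ from Lemma~\ref{lem:tau-continuity}(b). Where you diverge is in how that non-degeneracy bound is established. The paper gives a hands-on planar construction: it rotates coordinates so that $\Vin$ is vertical, builds explicit neighbourhoods $\mathcal{N}(\Xin)$, $\mathcal{N}(\Xout_0)$ on which $\Vin\cdot n$ has a definite sign, and uses auxiliary boundary points $A_1,\dots,A_4$, $\bar A_1$, $\bar A_2$ and an angle $\eta_0$ to trap the backward exit point of $x-sv$ inside $\mathcal{N}(\Xin)$, yielding a concrete admissible $\gamma_0$. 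You instead argue by contradiction and sequential compactness, observing that along the base chord $\tau_-(\Xout_0 - s\Vin,\Vin) = L-s$ with exit point identically $\Xin$, and then using the continuity of the exit-point map at $(y_\ast,\Vin)$ — itself a consequence of Lemma~\ref{lem:tau-continuity}(b) at the limit point, including the two endpoint cases $y_\ast = \Xout_0$ and $y_\ast = \Xin$ — to force $w_n\cdot n(y_{-,n}) \to -c_{\mathrm{in}}$. Your route is shorter and avoids the geometric bookkeeping, at the cost of being non-constructive (it produces no explicit $\gamma_0$, which the paper's construction does and which is mildly convenient when $\gamma_0$ reappears in Corollaries~\ref{cor:1} and~\ref{cor:2}); it also leans more heavily on the pointwise differentiability statement of Lemma~\ref{lem:tau-continuity}(b) to supply continuity of $x_-$ at interior and boundary limit points, which is legitimate since that lemma holds on all of $\bar\Omega\times\Ss^1$ under the transversality condition. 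Both arguments are sound.
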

\begin{proof}
By Lemma~\ref{lem:tau-continuity}, we only need to show that there exists a constant $c_{0,1} > 0$ such that
\begin{align} \label{cond:non-deg-1}
   w \cdot n((x - sv)_-) < - c_{0, 1} < 0
\end{align}
for any $(x, v, s, w)$ satisying~\eqref{cond:neighbourhood}, recalling that $(x - sv)_-$ is the backward exit point of $x - sv$. The idea is to show that $(x - sv)_-$ is close to $\Xin$ when $\gamma_0$ is small. Then by the continuity of the outward normal $n$, we obtain~\eqref{cond:non-deg-1} from the non-degeneracy condition at $(\Xin, \Vin)$. The closeness of $(x - sv)_-$ to $\Xin$ is fairly evident from the geometry shown in Figure~\ref{Fig:2}. 
\begin{figure}[h] 
\includegraphics[scale = 0.36]{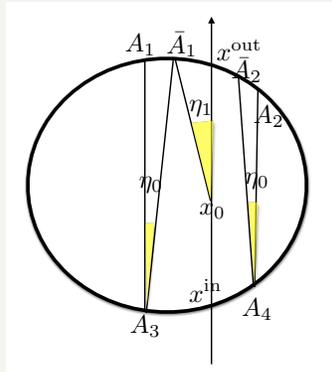}
\caption{Geometry for non-degeneracy}\label{Fig:2}
\end{figure}

For a rigorous proof, we first assume, via a proper rotation and translation, that $\Vin$ is along the positive $y$-axis and $\Xin$ and $\Xout_0$ are both on the $y$-axis. Since $\Omega$ is convex and $\Vin \cdot n(\Xin) \neq 0$, we have
\begin{align*}
   \Vin \cdot n(\Xout_0) > 0 \,.
\end{align*}Take small neighborhoods $\mathcal{N}(\Xin), \mathcal{N}(\Xout_0)$ around $\Xin$ and $\Xout_0$ on $\del\Omega$ such that
\begin{align*}
   \Vin \cdot n(x) &< \frac{1}{2} \Vin \cdot n(\Xin) < 0 \,,
\qquad \,\,
  \forall \, x \in \mathcal{N}(\Xin) \,,
\\
   \Vin \cdot n(x) &> \frac{1}{2} \Vin \cdot n(\Xout_0) > 0 \,,
\qquad
  \forall \, x \in \mathcal{N}(\Xout_0) \,,
\end{align*}
Denote the boundary vertices of $\mathcal{N}(\Xin), \mathcal{N}(\Xout_0)$ as $A_1, A_2, A_3, A_4$. By adjusting the sizes of $\mathcal{N}(\Xin), \mathcal{N}(\Xout_0)$ we can choose these vertices in the way such that
\begin{align*}
   A_1 A_3 \Parallel A_2 A_4 \Parallel y-\text{axis} \,.
\end{align*}
Choose $\bar {A}_1$ and $\bar A_2$ as two points on $\text{arc}(A_1 \Xout_0)$ and $\text{arc}(A_2 \Xout_0)$ respectively such that 
\begin{align*}
   \angle A_1 A_3 \bar A_1
= \angle A_2 A_4 \bar A_2
= : \eta_0 \,.
\end{align*}
Denote the region bounded by the line segments $\bar A_1 A_3$, $\bar A_2 A_4$ and the two arcs $\text{arc}(\bar A_1 \bar A_2)$, $\text{arc}(A_3 A_4)$ as $D_0$. Then for any $(x, v) \in \Gamma_+$ with $\cos^{-1}(v \cdot \Vin) < \eta_0$ and any $s \in (0, \tau_-(x, v))$, we have
\begin{align*}
   (x - sv)_- \in \mathcal{N}(\Xin) \,.
\end{align*}
Hence, for such $(x, v, s)$ we have
\begin{align*}
   \Vin \cdot n((x - sv)_-) < \frac{1}{2} \Vin \cdot n(\Xin) < 0  \,.
\end{align*}
Take $\gamma_0$ small enough such that 
\begin{align*}
   \gamma_0 
< \min\left\{\frac{1}{2} \eta_0, \,\, \frac{1}{4} \Vin \cdot n(\Xin), \,\,
   \abs{\bar A_1 \Xout_0}, \, \abs{\bar A_2 \Xout_0}\right\} \,.
\end{align*}
Then
\begin{align*}
   w \cdot n((x - sv)_-)
< \frac{1}{4} \Vin \cdot n(\Xin) 
< 0
\end{align*}
for any $(x, v, s, w)$ satisfying~\eqref{cond:neighbourhood}. Hence $\tau_-$ is $C^1$ over the region~\eqref{cond:neighbourhood}. The explicit formula for $\nabla_x \tau_-$ in Lemma~\ref{lem:tau-continuity} shows that $\norm{\nabla_x \tau_-(\cdot, w)}_{L^\infty}$ is uniformly bounded in $w$. 
\end{proof} 

Two immediate consequences follow. 
\begin{cor} \label{cor:1}
There exist $\eta_\ast, \gamma_\ast$ such that if $\eta$ in Theorem~\ref{thm:recover-scattering} satisfies $\eta < \eta_\ast$, then $\tau_-(x-sv, w)$ is $C^1$ in $(x, v, s, w)$ over the domain
\begin{align} \label{cond:neighbourhood-1}
   \abs{x - \Xout} + \abs{w - \Vin} + \abs{v - \Vout} < \gamma_\ast \,,
\qquad
   s \in (0, \tau_-(x, v)) \,,
\qquad
  (x, v) \in \Gamma_+ \,.
\end{align}
Moreover, the bound $\norm{\nabla_x \tau_-(\cdot, w)}_{L^\infty}$ is independent of $w$ over the region~\eqref{cond:neighbourhood}.
\end{cor}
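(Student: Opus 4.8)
The plan is to obtain Corollary~\ref{cor:1} as a direct consequence of Lemma~\ref{lem:C-1-tau-minus}: I will show that when $\eta$ is small, the region~\eqref{cond:neighbourhood-1}, which is centred at $(\Xout, \Vout)$, is contained in the region~\eqref{cond:neighbourhood}, which is centred at $(\Xout_0, \Vin)$, so that the $C^1$ regularity and the $w$-uniform gradient bound furnished by Lemma~\ref{lem:C-1-tau-minus} transfer immediately.

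The only thing to verify is that the centre $(\Xout, \Vout)$ converges to $(\Xout_0, \Vin)$ as $\eta \to 0$. For the velocity component this is immediate from~\eqref{diff:Vout-Vin}, which gives $\abs{\Vout - \Vin} \leq 2\eta$. For the spatial component, recall $\Xout = x_0 + \tau_+(x_0, \Vout) \Vout$ and $\Xout_0 = x_0 + \tau_+(x_0, \Vin) \Vin$; since $\Omega$ is strictly convex and $\Vin \cdot n(\Xin) \neq 0$ we have $\Vin \cdot n(\Xout_0) > 0$ (the same non-degeneracy used in the proof of Lemma~\ref{lem:C-1-tau-minus}), so the forward exit time $\tau_+(x_0, \cdot) = \tau_-(x_0, -\,\cdot)$ is continuous near $\Vin$ by the forward analogue of Lemma~\ref{lem:tau-continuity}, and hence $\Xout \to \Xout_0$ as $\eta \to 0$. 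Now fix $\gamma_0$ as in Lemma~\ref{lem:C-1-tau-minus}, set $\gamma_\ast = \tfrac{1}{2}\gamma_0$, and choose $\eta_\ast > 0$ so small that $\abs{\Xout - \Xout_0} + \abs{\Vout - \Vin} < \tfrac{1}{2}\gamma_0$ whenever $\eta < \eta_\ast$. Then for any $(x, v, s, w)$ satisfying~\eqref{cond:neighbourhood-1} the triangle inequality yields
\begin{align*}
   \abs{x - \Xout_0} + \abs{w - \Vin} + \abs{v - \Vin}
&\leq \vpran{\abs{x - \Xout} + \abs{w - \Vin} + \abs{v - \Vout}}
       + \abs{\Xout - \Xout_0} + \abs{\Vout - \Vin}
\\
&< \gamma_\ast + \tfrac{1}{2}\gamma_0 = \gamma_0 \,,
\end{align*}
while the remaining constraints $s \in (0, \tau_-(x, v))$ and $(x, v) \in \Gamma_+$ are identical in~\eqref{cond:neighbourhood-1} and~\eqref{cond:neighbourhood}. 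Thus~\eqref{cond:neighbourhood-1} implies~\eqref{cond:neighbourhood}, and Lemma~\ref{lem:C-1-tau-minus} gives that $\tau_-(x - sv, w) \in C^1$ over~\eqref{cond:neighbourhood-1} together with a bound on $\norm{\nabla_x \tau_-(\cdot, w)}_{L^\infty}$ that does not depend on $w$.

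There is no genuine difficulty here; the step requiring slight care is the convergence $\Xout \to \Xout_0$, which relies on the continuous (indeed $C^1$) dependence of the exit point on the direction at the non-degenerate configuration $(x_0, \Vin)$ — exactly the strict-convexity and non-degeneracy hypotheses already invoked in Lemma~\ref{lem:C-1-tau-minus}.
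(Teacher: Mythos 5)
Your proof is correct and follows essentially the same route as the paper: reduce Corollary~\ref{cor:1} to Lemma~\ref{lem:C-1-tau-minus} by showing that for $\eta$ small the region~\eqref{cond:neighbourhood-1} is contained in~\eqref{cond:neighbourhood}, using $\abs{\Vout - \Vin} \leq 2\eta$ from~\eqref{diff:Vout-Vin}, the convergence $\Xout \to \Xout_0$, and the triangle inequality with $\gamma_\ast = \tfrac{1}{2}\gamma_0$. The only (harmless) difference is in how $\abs{\Xout - \Xout_0}$ is made small: the paper reuses the arc construction ($\bar A_1$, the angle $\bar\eta$) from the proof of Lemma~\ref{lem:C-1-tau-minus}, whereas you invoke the continuity of the forward exit time $\tau_+(x_0,\cdot)$ at the non-degenerate direction $\Vin$, which rests on the same non-degeneracy $\Vin \cdot n(\Xout_0) > 0$.
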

\begin{proof}
By Lemma~\ref{lem:C-1-tau-minus}, we only need to show that $\Xout$ is close to $\Xout_0$ and $\Vout$ is close to $\Vin$ by taking $\eta_\ast$ small. By~\eqref{diff:Vout-Vin}, if we taking $\eta_\ast < \frac{1}{8} \gamma_0$, then
\begin{align*}
   \abs{\Vout - \Vin} \leq 2 \eta < \frac{1}{4} \gamma_0 \,.
\end{align*}
Denote the angle $\angle \bar A_1 x_0 \Xout_0$ as $\bar \eta$. Then for $\eta_\ast < \bar \eta$, the point $\Xout$ is on arc$(\bar A_1 \Xout_0)$. Since
\begin{align*}
   \lim_{\bar \eta \to 0} \abs{\bar A_1 - \Xout_0} = 0 \,,
\end{align*}
by choosing $\eta_\ast$ small enough, we have
\begin{align*}
   \abs{\Xout - \Xout_0} < \frac{1}{4} \gamma_0 \,.
\end{align*}
Hence if we let $\gamma_\ast = \frac{1}{2} \gamma_0$, then for any $(x, v, s, w)$ in the region~\eqref{cond:neighbourhood-1}, they also satisfy that
\begin{align*}
& \quad \,
   \abs{x - \Xout_0} + \abs{w - \Vin} + \abs{v - \Vin}
\\
&\leq
  \abs{x - \Xout} + \abs{w - \Vin} + \abs{v - \Vout}
  + \abs{\Xout - \Xout_0} + \abs{\Vout - \Vin}
\\
& < \frac{1}{2}\gamma_0 + \frac{1}{4}\gamma_0 
       + \frac{1}{4}\gamma_0 = \gamma_0 \,,
\end{align*}
whereby Lemma~\ref{lem:C-1-tau-minus} applies. 
\end{proof}

\begin{cor} \label{cor:2}
Let $\gamma_\ast$ be the upper bound such that $\tau_-$ is $C^1$ in the domain~\eqref{cond:neighbourhood-1}. Then for $\gamma_\ast$ small enough, $s_0$ is always an interior point in $(0, \tau_-(x, v))$ whenever $(x, v)$ satisfies~\eqref{cond:neighbourhood-1}. 
\end{cor}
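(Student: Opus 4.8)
\textbf{Proof proposal for Corollary~\ref{cor:2}.} The plan is to transfer the strict interiority of the intersection point $x_0$ to all the perturbed chords $s\mapsto x-sv$, so that the fixed parameter value $s_0$ stays strictly between the endpoints $0$ and $\tau_-(x,v)$ for every nearby $(x,v)$. First I would record that $x_0$ is a \emph{strictly interior} point of $\Omega$: it is constructed as the transversal intersection inside $\Omega$ of the two rays $\ell_1,\ell_2$ emanating from the distinct boundary points $\Xin,\Xout$ into the strictly convex domain (using $\Vin\nparallel\Vout$ and $\Vin\cdot\Vout>0$), so $d_0:=\text{dist}(x_0,\del\Omega)>0$. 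By the definition in~\eqref{def:x-0} we have $x_0=\Xout-s_0\Vout$, and since $x_0\in\Omega$ while $\Xout\in\del\Omega$, we get $0<s_0=|x_0-\Xout|\leq\text{diam}(\Omega)$.

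Next, for any $(x,v)$ satisfying~\eqref{cond:neighbourhood-1} (the $w$-component is irrelevant here, since neither $s_0$ nor $\tau_-(x,v)$ depends on $w$), I would estimate
\[
  |x-s_0 v-x_0|=\bigl|(x-\Xout)-s_0(v-\Vout)\bigr|\leq|x-\Xout|+s_0\,|v-\Vout|\leq\bigl(1+\text{diam}(\Omega)\bigr)\gamma_\ast .
\]
Hence, shrinking the value of $\gamma_\ast$ fixed in Corollary~\ref{cor:1} if necessary so that $\bigl(1+\text{diam}(\Omega)\bigr)\gamma_\ast<d_0/2$, the point $x-s_0 v$ lies in the open $\tfrac{d_0}{2}$-ball around $x_0$, which is contained in $\Omega$; in particular $x-s_0 v$ is an interior point of $\Omega$.

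Finally I would invoke convexity of $\Omega$: for $(x,v)\in\Gamma_+$, the set $\{s\geq 0:\ x-sv\in\bar\Omega\}$ equals the closed interval $[0,\tau_-(x,v)]$, with $x-sv\in\del\Omega$ precisely at $s\in\{0,\tau_-(x,v)\}$ and $x-sv$ interior for $s\in(0,\tau_-(x,v))$. Since $x-s_0 v$ is interior, $s_0$ can be neither $0$ nor $\tau_-(x,v)$, nor can it lie outside $[0,\tau_-(x,v)]$; therefore $0<s_0<\tau_-(x,v)$, which is the assertion. I expect the argument to be entirely routine: the only point needing a few words is the justification that $x_0$ is strictly interior and the compatible choice of a small $\gamma_\ast$; the continuity of $\tau_-$ that one might otherwise want is in any case already supplied by Corollary~\ref{cor:1}, and the convexity characterization of the chord makes the conclusion immediate without any quantitative modulus.
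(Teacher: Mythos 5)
Your argument is correct, but it takes a genuinely different route from the paper. The paper's proof stays entirely on the level of the exit time: it sets $\sigma_0 := \tau_-(\Xout,\Vout) - s_0 > 0$, invokes the continuity of $\tau_-$ supplied by Corollary~\ref{cor:1} on the closure of the region~\eqref{cond:neighbourhood-1}, and concludes that $\tau_-(x,v) > s_0$ once $\gamma_\ast$ is small enough that $\abs{\tau_-(x,v) - \tau_-(\Xout,\Vout)} < \tfrac{1}{2}\sigma_0$. You instead work in physical space: you use that $x_0$ is strictly interior with $d_0 = \mathrm{dist}(x_0,\del\Omega) > 0$, show $\abs{x - s_0 v - x_0} \leq (1+\mathrm{diam}(\Omega))\gamma_\ast$, and then read off $0 < s_0 < \tau_-(x,v)$ from the convexity characterization of the chord $\{s : x - sv \in \bar\Omega\} = [0,\tau_-(x,v)]$. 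Your version is more elementary in that it does not need the $C^1$ (or even continuity) information about $\tau_-$ from Corollary~\ref{cor:1} at all, and it yields an explicit admissible threshold $\gamma_\ast < d_0/(2(1+\mathrm{diam}(\Omega)))$ rather than an unquantified modulus of continuity; the paper's version is shorter given that Corollary~\ref{cor:1} is already in hand and avoids having to discuss the interiority of $x_0$ and the chord structure of the convex domain. The only places where you should add a word or two are the justification that $x_0 \in \Omega$ is genuinely interior (the paper asserts this when defining $x_0$ as the intersection of $\ell_1$ and $\ell_2$ inside $\Omega$, so you may simply cite that) and the observation that once a backward ray from $(x,v)\in\Gamma_+$ re-enters the open set it cannot have already exited, which is exactly where convexity is used to exclude $s_0 > \tau_-(x,v)$. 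Neither is a gap; both are one-line remarks.
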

\begin{proof}
First recall that $s_0 \in (0, \tau_-(\Xout, \Vout))$. Then
\begin{align*}
   \sigma_0 := \tau_-(\Xout, \Vout) - s_0 > 0 \,.
\end{align*}
By Corollary~\ref{cor:1}, the backward exist time $\tau_-(x, v)$ is continuous for $(x, v)$ in the closure of the domain dictated by~\eqref{cond:neighbourhood-1}.  Hence if $\gamma_\ast$ is small enough, then
\begin{align*}
   \abs{\tau_-(x, v) - \tau_-(\Xout, \Vout)} < \frac{1}{2} \sigma_0 \,.
\end{align*}
Therefore, 
\begin{align*}
   \tau_-(x, v) - s_0
> \tau_-(\Xout, \Vout) - \frac{1}{2} \sigma_0 - s_0
= \sigma_0 > 0 \,,
\end{align*}
which shows $s \in (0, \tau_-(x, v))$.
\end{proof}

\bibliographystyle{amsxport}
\bibliography{transbib}

\end{document}